\newtheorem{theorem} {{\textsf{Theorem}}}
\newtheorem{proposition}[theorem]{{\textsf{Proposition}}}
\newtheorem{corollary}[theorem]{{\textsf{Corollary}}}
\newtheorem{definition}[theorem]{{\textsf{Definition}}}
\newtheorem{lemma}[theorem]{{\textsf{Lemma}}}
\newcommand{\Star}{\mbox{\upshape st}\,}
\newcommand{\lk}{\mbox{\upshape lk}\,}
\begin{document}
\title{A characterization of normal $3$-pseudomanifolds with $g_2\leq4$}
\author{Biplab Basak and Raju Kumar Gupta}
	
\date{}
	
\maketitle
	
\vspace{-12mm}
\begin{center}
	
\noindent {\small Department of Mathematics, Indian Institute of Technology Delhi, New Delhi 110016, India$^{1}$.}
	
\end{center}

\footnotetext[1]{{\em E-mail addresses:} \url{biplab@iitd.ac.in} (B.
	Basak), \url{Raju.Kumar.Gupta@maths.iitd.ac.in} (R. K. Gupta).}

\medskip

\begin{center}
\date{February 14, 2022}
\end{center}

\hrule

\begin{abstract}
We characterize normal $3$-pseudomanifolds with $g_2\leq4$. We know that if a $3$-pseudomanifold  with $g_2\leq4$ does not have any singular vertices then it is a $3$-sphere.  We first prove that a normal $3$-pseudomanifold with $g_2\leq4$ has at most two singular vertices.
Then we prove that a normal $3$-pseudomanifold with $g_2 \leq 4$, which is not a $3$-sphere is obtained from some boundary of $4$-simplices  by a sequence of operations  connected sum, edge expansion and an edge folding. In addition, by using \cite{Walkup}, we re-framed the characterization of normal $3$-pseudomanifolds with $g_2\leq 9$, when it has no singular vertices.
\end{abstract}

\noindent {\small {\em MSC 2020\,:} Primary 05E45; secondary 52B05, 57Q05, 57Q25, 57Q15.
	
	\noindent {\em Keywords:} Normal pseudomanifolds, $f$-vector, edge folding, edge contraction.}

\medskip

\section{Introduction}
For every $d$-dimensional simplicial complex $K$, the $f$-vector is defined as $(f_0,f_1, \dots ,f_d)$, where each $f_i$ is number of $i$-dimensional faces of $K$. This vector has a remarkable importance in simplicial topology, in fact it is one of the main tools to study simplicial complexes. Walkup \cite{Walkup} proved that the number $f_1 - 4f_0 + 10$ is non-negative for any closed connected $3$-manifold $K$, and later this number is called as $g_2$. Walkup also proved that for a $3$-manifold $K$,  $g_2(K)=0$ if and only if $K$ is  triangulation of a stacked sphere. Later, Barnette \cite{Barnette1,Barnette2,Barnette3}  proved $g_2(K)\geq 0$ for boundary complex of a simplicial $(d + 1)$-polytope, and Kalai \cite{Kalai} proved the same result for normal pseudomanifolds of dimension at least 3 where the link of each face of codimension 2 was stacked sphere. Kalai used a new approach of rigidity theory and started studying the  simplicial complexes with their $g$-vector. Kalai proved that $g_2(K)\geq g_2(lk(v))$ for any vertex $v\in K$ where $K$ is $d$-rigid pure $d$-simplicial complex. Later, Fogalsanger \cite{Fogelsanger} proved the same result for any normal $d$-pseudomanifold. In [\cite{Gromov}, pages $211-212$], Gromov has similar result on the nonnegativity of $g_2$. In \cite{Swartz2008}, we can find that there are only a finite number of PL-homeomorphism classes of combinatorial manifolds of a given dimension $d$ and a given upper bound on $g_2$. In \cite{BagchiDatta98}, we can find all possible classifications of a triangulated $d$-pseudomanifold with at most $d+4$ vertices. Also, We can find the similar results in [\cite{NovikSwartz},\cite{Swartz2009},\cite{TayWhiteWhiteley}]. Further, 
 using the rigidity theory the complete characterization came for a normal $d$-pseudomanifold for $g_2(K)\leq2$ where in every case $K$ is a polytopal spehere. These cases for $g_2(K)=0,1,2$ are settled by Kalai \cite{Kalai}, Nevo and Novinsky \cite{NevoNovinsky} and Zheng \cite{Zheng} respectively. Later, Basak and Swartz \cite{BasakSwartz} gave the complete  characterization for normal $3$-pseudomanifolds $(|K|\not\cong\mathbb{S}^3)$ with $g_2(K)=3$.
 
In this paper we give the complete characterization for a normal $3$-pseudomanifold with $g_2(K)\leq 4$. 
We first prove that a normal $3$-pseudomanifold with $g_2\leq 4$ can have at most two singular vertices. We prove that, if $K$ has no singular vertices and $g_2\leq 9$ then $K$ is obtained from some boundary of $4$-simplices by a sequence of operations connected sum,  bistellar $1$-move, edge contraction and edge expansion. The characterization of a normal $3$-pseudomanifold $K$ with $g_2\leq 3$ is known from \cite{Kalai, NevoNovinsky, Zheng,BasakSwartz}. We prove that, if $K$ has some singular vertices and $g_2=4$ then $K$ is obtained from some boundary of $4$-simplices  by a sequence of operations connected sum, edge expansion and an edge folding.

\section{Preliminaries}
A polytope is a convex hull of finite number of points. A $d$-simplex is defined as a $d$ dimensional polytope of exactly $d+1$ points. For example a point is a $0$-simplex, a line segment between two points is a $1$-simplex, similarly a triangle is a $2$-simplex and so on.
Let $\sigma$ be a simplex. Then we denote $V(\sigma)$ as the vertex set of $\sigma$ and face of $\sigma$ is defined as the convex hull of non-empty subset of $V(\sigma)$. We denote a face $\tau$ of $\sigma$ as $\tau\leq\sigma$.    
A simplicial complex is defined as finite collection of simplices where for every simplex $\sigma\in K$, all the  faces of $\sigma$ are in $K$ and for any two simplices $\sigma$ and $\tau$ in $K$, $\sigma\cap\tau$ is either empty or it is face of both the simplices $\sigma$ and $\tau$. If $K$ is a simplicial complex then $|K|:= \cup_{\sigma \in K} \sigma$ is a compact polyhedron and is called the geometric carrier of $K$ or the underlying polyhedron corresponding to $K$, and we say $K$ is a triangulation of $|K|$.
The dimension of a face $\sigma\in K$ is $|\sigma|-1$ and dimension of $K$ is defined as the maximum of dimension of simplices in $K$. If $\sigma$ and $\tau$ are two simplices of $K$ then their join $\sigma\star \tau$ is defined as the set $\{\lambda a + \mu b : a\in\sigma ,b\in\tau; \lambda, \mu \in [0, 1]$ and $ \lambda + \mu = 1\}$. Similarly we define join of two simplicial complexes $K_1$ and $K_2$ as $\{\sigma \star \tau : \sigma\in K_1, \tau\in K_2\}$.
The link of a face $\sigma$ in $K$ is the collection of faces in $K$ which do not intersect $\sigma$ and whose join with $\sigma$ lies in $K$ and is denoted by $\lk (\sigma, K)$. Star of a face $\sigma$ in $K$ is  $\{\gamma : \gamma\leq \sigma\star\alpha$ and $ \alpha\in \lk (\sigma, K)\}$ and is denoted by $\Star (\sigma, K)$. The graph of the simplicial complex $K$ is denoted by $G(K)$.
 The simplicial complex $K$ is called a normal $d$-pseudomanifold without boundary if all the facets (maximal faces) of $K$ are of same dimension, every face of dimension $d-1$ is contained in exactly two facets and link of every face of co-dimension $2$ or more is connected.  If the face of dimension $d-1$ is contained in one facets then $K$ is normal $d$-pseudomanifold with boundary. For a normal $3$-pseudomanifold $K$, link of a vertex $v$ is a closed connected surface say $S$. If $S\cong \mathbb{S}^2$ then we call $v$ is a non-singular vertex; otherwise, $v$ is called a singular vertex. Now, we have the following definitions and lemmas. 

\begin{definition}[Edge contraction]\label{Edge contraction} Let $K$ be a normal $d$-pseudomanifold and $u,v$ be two vertices of $K$ such that $uv \in K$ and $\lk (u,K)\cap \lk (v,K)=\lk (uv,K)$. Let $K'= K \setminus (\{\alpha\in K : u\leq \alpha\} \cup \{\beta\in K : v\leq \beta\})$, then $K_1=K'\cup\{w\star \partial (K')\}$ for some vertex $w$ is said to be obtained from $K$ by contracting the edge $uv$ and this process is called the edge contraction.
\end{definition}
 \begin{lemma}[\cite{BSR}] \label{homeomorphic}
 	For $d\geq 3$, let $K$ be a normal $d$-pseudomanifold. Let $uv$ be an edge of $K$ such that $\lk (u,K)\cap \lk (v,K)= \lk (uv,K)$ and $|\lk (v,K)| \cong \mathbb{S}^{d-1}$. If $K_1$ is the normal pseudomanifold obtained from $K$ by contracting the edge $uv$ then  $|K|\cong |K_1|$.
 \end{lemma}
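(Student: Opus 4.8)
The plan is to localise the homeomorphism near the edge $uv$, exploiting that since $|\lk(v,K)|\cong\mathbb{S}^{d-1}$ the star of $v$ is a PL $d$-ball that meets the rest of $K$ along $\Star(uv,K)$ in a tame, half-ball fashion. Write $L_u=\lk(u,K)$, $L_v=\lk(v,K)$, $C=\lk(uv,K)$, so the link hypothesis reads $L_u\cap L_v=C$. Put $N:=\Star(u,K)\cup\Star(v,K)$; since every face of $K$ contains $u$, contains $v$, or contains neither, $K=K'\cup N$, and (using the link condition) one checks $K'\cap N=\partial K'=\partial N$. As $K_1=K'\cup\Star(w,K_1)$ with $\Star(w,K_1)=w\star\partial K'$, $\lk(w,K_1)=\partial K'$ and $K'\cap\Star(w,K_1)=\partial K'$, we obtain $|K|=|K'|\cup_{|\partial K'|}|N|$ and $|K_1|=|K'|\cup_{|\partial K'|}|\Star(w,K_1)|$. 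Hence it suffices to build a PL homeomorphism $h\colon|N|\to|\Star(w,K_1)|$ that restricts to the identity on $|\partial K'|$, for then $\mathrm{id}_{|K'|}\cup h$ is a PL homeomorphism $|K|\to|K_1|$.

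First I would prove the combinatorial identity $\Star(u,K)\cap\Star(v,K)=\Star(uv,K)=uv\star C$: a common face either contains $uv$, or contains exactly one of $u,v$ (and then lies in $u\star C$ or $v\star C$, using $\lk(u,L_v)=\lk(v,L_u)=C$), or contains neither, in which case it lies in $L_u\cap L_v=C$ --- this last alternative being precisely where the link hypothesis is used. Next, since $|L_v|\cong\mathbb{S}^{d-1}$, $L_v$ is a combinatorial $(d-1)$-sphere, so $\Star(u,L_v)=u\star C$ is a combinatorial $(d-1)$-ball, and, by the PL fact that a ball in a sphere has ball complement, so is the subcomplex $D\subseteq L_v$ of faces avoiding $u$, with $\partial D=C$ and $L_v=\Star(u,L_v)\cup D$. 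Feeding this in, $\Star(v,K)=v\star L_v=(uv\star C)\cup(v\star D)$ with $uv\star C\subseteq\Star(u,K)$, so $N=\Star(u,K)\cup(v\star D)$, and by the identity above $\Star(u,K)\cap(v\star D)=v\star C=\Star(v,L_u)$. Moreover $\lk(v,\Star(u,K))=u\star C$ is a PL $(d-1)$-ball, so $v$ is a manifold point of $|\Star(u,K)|$ and $|\Star(uv,K)|$ is a PL half-ball neighbourhood of it whose face on $\partial|\Star(u,K)|$ is $|v\star C|$.

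Geometrically, then, $|N|$ is the cone $|\Star(u,K)|=|u\star L_u|$ with the PL $d$-ball $|v\star D|$ attached along $|v\star C|$, the flat face of the half-ball neighbourhood of the manifold point $v$; absorbing this ball into that neighbourhood by a standard PL move gives $|N|\cong|\Star(u,K)|$. On the other hand $\lk(w,K_1)=\partial K'=(L_u\setminus v)\cup D$ (the two subcomplexes meeting in $C$), which is just $L_u$ with its $(d-1)$-ball $\Star(v,L_u)$ excised and replaced, along the common boundary sphere $C$, by the $(d-1)$-ball $D$; as any two PL $(d-1)$-balls with the same boundary are PL-homeomorphic rel that boundary, $|\partial K'|\cong|L_u|$, so coning gives $|\Star(w,K_1)|=|w\star\partial K'|\cong|u\star L_u|=|\Star(u,K)|$. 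Composing yields $|N|\cong|\Star(w,K_1)|$. Finally, since $\partial|N|=|\partial K'|=\partial|\Star(w,K_1)|$ and $|\Star(w,K_1)|=w\star|\partial K'|$ is a cone on $|\partial K'|$, every self-homeomorphism of $|\partial K'|$ extends over it; composing the homeomorphism just obtained with the cone on the inverse of the self-homeomorphism it induces on $|\partial K'|$ produces $h$ with $h|_{|\partial K'|}=\mathrm{id}$, as required.

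I expect two points to need genuine care. First, the combinatorial identity $\Star(u,K)\cap\Star(v,K)=uv\star\lk(uv,K)$, which fails without the link condition, together with the claim that $K'$ is an honest pseudomanifold-with-boundary with $K'\cap N=\partial K'=\partial N$ --- here the link condition is exactly what excludes the degenerate case of a $(d-1)$-face whose two facets contain $u$ and $v$ respectively. Second, and the real crux, keeping the local homeomorphism under control on $|\partial K'|$: an abstract homeomorphism $|N|\cong|\Star(w,K_1)|$ does not on its own give $|K|\cong|K_1|$, and the argument closes only because $|N|$ and $|\Star(w,K_1)|$ share the boundary $|\partial K'|$ and the latter is a cone, which lets the boundary behaviour be corrected at no cost. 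The remaining ingredients --- a vertex link in a combinatorial sphere is a ball, a PL ball has PL ball complement in a PL sphere, a cone on a PL ball is a PL ball, the Alexander trick, absorbing a $d$-ball along the flat face of a half-ball neighbourhood of a manifold point, and a cone on a PL homeomorphism extends it --- are all classical, and I would simply cite them.
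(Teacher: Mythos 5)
You should note at the outset that this paper contains no proof of Lemma \ref{homeomorphic}: it is imported verbatim from \cite{BSR}, so there is no in-paper argument to compare yours against, and I can only judge your proof on its own terms. Your overall scheme is sound and your combinatorial bookkeeping is correct: $\Star(u,K)\cap\Star(v,K)=uv\star C$ under the link condition, $K=K'\cup N$ with $K'\cap N=\partial K'=\partial N=(L_u\setminus v)\cup D$, the reduction to a homeomorphism $|N|\to|w\star\partial K'|$ fixing $|\partial K'|$, the absorption of the ball $|v\star D|$ along the face $|v\star C|$ of the ball $|uv\star C|$, and the replacement of $\Star(v,L_u)$ by $D$ inside $L_u$ via the rel-boundary Alexander trick. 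For $d=3$ (the only case this paper actually uses) all of this goes through.

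As a proof of the lemma as stated for every $d\ge3$, however, there is a genuine gap at the step ``since $|L_v|\cong\mathbb{S}^{d-1}$, $L_v$ is a combinatorial $(d-1)$-sphere.'' A triangulation of a sphere need not be a combinatorial sphere once $d-1\ge5$: the double suspension of the Poincar\'e homology $3$-sphere triangulates $\mathbb{S}^5$ but is not a PL sphere. Everything downstream leans on that assertion: that $C=\lk(u,L_v)$ is a PL $(d-2)$-sphere (hence $u\star C$ and $v\star C$ are PL balls), that the complement $D$ of the open star of $u$ in $L_v$ is a PL ball (Newman's theorem requires the PL sphere), and the Alexander-trick identifications used in both halves of your construction. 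So your argument is complete only in the range where triangulated spheres are automatically PL (in particular $d=3$); for general $d$ you must either add a PL hypothesis on $\lk(v,K)$ or replace the PL machinery by a genuinely topological argument. Two smaller points deserve a line each: (i) the identity $\partial K'=\partial N=(L_u\setminus v)\cup D$ uses that every face of $L_u$ avoiding $v$ lies in a facet of $L_u$ avoiding $v$ (true, by normality of $L_u$, since otherwise some $\lk(\tau,L_u)$ would be a cone with apex $v$, impossible for a closed normal pseudomanifold); (ii) your final coning correction presupposes that the homeomorphism $|N|\to|\Star(w,K_1)|$ already carries $|\partial K'|$ onto itself, which is not automatic for an abstract homeomorphism of pseudomanifolds with boundary --- either arrange it by construction (each of your ball-to-ball homeomorphisms can be chosen to respect the designated boundary faces) or invoke the local-homology characterization of boundary points.
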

 
\begin{definition}[Handle addition and Connected sum]
	Let $\sigma_1$ and $\sigma_2$ be two facets of a simplicial complex $\Delta$.  A bijective map $\psi:\sigma_1 \to \sigma_2$ is admissible if  for all  vertices $x$ of $\sigma_1$ the length of path from $x$ to $\psi(x)$ is at least three (cf. \cite{BagchiDatta98}). Now from the map $\psi$ we can form the complex $\Delta^\psi$ obtained by identifying all faces $\rho_1 \subseteq \sigma_1 $ and $\rho_2 \subseteq \sigma_2 $ such that $\psi(\rho_1) = \rho_2,$ and then removing the facet formed by identifying $\sigma_1$ and $\sigma_2.$  In this case we call $\Delta^\psi$ is obtained from  $\Delta$ by {\bf handle addition}  if $\sigma_1$ and $\sigma_2$ are from same connected component of $\Delta$ and is obtained by {\bf connected sum} if $\sigma_1$ and $\sigma_2$ are from distinct components.
\end{definition}
A straightforward computation shows that for a $d$-dimensional complex $\Delta$ handle additions satisfy,
\begin{equation} \label{g_2: handles}
	g_2(\Delta^\psi) = g_2(\Delta) + \binom{d+2}{2}.
\end{equation}

\noindent Similarly, for connected sum
\begin{equation} \label{g_2:connected sum}
	g_2(\Delta_1 ~\#_\psi~ \Delta_2) = g_2(\Delta_1) + g_2(\Delta_2).
\end{equation}

\begin{lemma}[\cite{BasakSwartz}] \label{lemma:missingtetra1}
	Let $\Delta$ be a normal three-dimensional pseudomanifold and let $\tau$ be a missing tetrahedron in $\Delta.$  If for every vertex $x \in \tau$  the missing triangle formed by the other three vertices separates the link of $x,$  then $\Delta$ was formed using handle addition or connected sum.  
\end{lemma}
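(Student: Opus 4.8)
The plan is to cut $\Delta$ open along the $2$-sphere $\partial\tau$ and recognise the resulting complex as the one from which $\Delta$ was assembled. Recall that $\tau$ being a missing tetrahedron means $\tau\notin\Delta$ while every proper face of $\tau$ lies in $\Delta$, so $\partial\tau$ is a subcomplex of $\Delta$ isomorphic to $\mathbb{S}^2$. For each vertex $x\in\tau$ I will write $S_x:=\lk (x,\Delta)$, a closed connected surface, and $\gamma_x$ for the (unfilled) $3$-cycle in $S_x$ spanned by the other three vertices of $\tau$; its vertices and edges are the images in $S_x$ of the edges and triangles of $\partial\tau$ through $x$. The hypothesis says each $\gamma_x$ separates $S_x$ into two surfaces with boundary, $S_x^1$ and $S_x^2$, each with boundary curve $\gamma_x$.

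First I would build the ``cut'' complex $\Delta'$: split every $x\in\tau$ into two vertices $x^1,x^2$; keep every face of $\Delta$ disjoint from $\tau$; send a face $x\star\rho$ with $x\in\tau$ and $\rho$ disjoint from $\tau$ to $x^i\star\rho$, where $\rho$ lies in the interior of $S_x^i$; send a face $xy\star\rho$ meeting $\tau$ in the edge $xy$ to $x^iy^i\star\rho$, where $\rho$ lies in the $i$-th of the two arcs into which $\lk (xy,\Delta)$ is cut by the other two vertices of $\tau$ (there are exactly two such arcs, each with an interior vertex, since those two vertices cannot be adjacent in $\lk (xy,\Delta)$ — that would put $\tau$ in $\Delta$); decorate faces meeting $\tau$ in a triangle compatibly with their edges; and finally adjoin the two new facets $\sigma_i:=\{x^i:x\in\tau\}$. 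I expect the main obstacle to be showing that this superscript decoration is globally well defined, i.e., that a face of $\Delta$ gets the same label however it is read off. The natural way to organise this is to observe that the ``two local sides of $\partial\tau$ in $\Delta$'' form a double cover of $\partial\tau$; since $\partial\tau\cong\mathbb{S}^2$ is simply connected, such a cover is trivial as soon as it is well defined, so it remains only to check that the monodromy of the local-sides datum is trivial around each triangle of $\partial\tau$ and around the link of each vertex of $\partial\tau$. Around a triangle it should be automatic: the triangle lies in exactly two tetrahedra of $\Delta$, and their two apices give a coherent naming of the two sides along the whole boundary of the triangle. Around the vertex $x$, the monodromy is precisely that of the normal bundle of $\gamma_x$ in $S_x$, which is trivial because a separating simple closed curve in a surface is two-sided — this is the one genuine use of the hypothesis. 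It will also be used, more mundanely, to know that $S_x\setminus\gamma_x$ has exactly two components, so that $x$ really splits into two vertices.

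Granting consistency, the rest is routine bookkeeping. One checks that $\lk (x^i,\Delta')$ is $S_x^i$ with its boundary $\gamma_x$ filled in by a triangle, hence a closed connected surface, so that $\Delta'$ is a normal $3$-pseudomanifold; each triangle of $\partial\tau$ yields two triangles of $\Delta'$, each lying in one (split) tetrahedron of $\Delta$ and in one of the new facets, while every other $2$-face still lies in exactly two facets. Let $\psi\colon\sigma_1\to\sigma_2$ be the bijection $x^1\mapsto x^2$. By construction, identifying the faces of $\sigma_1$ with those of $\sigma_2$ along $\psi$ and deleting the resulting facet gives back $\Delta$, so $\Delta=\Delta'^{\psi}$. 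The map $\psi$ is admissible: $x^1x^2\notin\Delta'$, and $x^1$ and $x^2$ share no neighbour (such a neighbour would be a doubled vertex, which is impossible, or would lie in $\mathrm{int}(S_x^1)\cap\mathrm{int}(S_x^2)=\emptyset$), so every path from $x^1$ to $x^2$ has length at least three.

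Finally, since $\Delta=\Delta'^{\psi}$ is connected, $\Delta'$ has at most two components. If $\Delta'$ is connected, then $\sigma_1$ and $\sigma_2$ lie in the same component, and $\Delta$ is obtained from $\Delta'$ by handle addition. If $\Delta'$ has two components, $\Delta_1\ni\sigma_1$ and $\Delta_2\ni\sigma_2$ — each then a normal $3$-pseudomanifold — then $\Delta=\Delta_1\,\#_\psi\,\Delta_2$ is a connected sum. Either way, $\Delta$ was formed using handle addition or connected sum.
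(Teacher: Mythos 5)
Your argument is correct: cutting $\Delta$ open along the $2$-sphere $\partial\tau$ (using the separation hypothesis to split each vertex star into two sides, and the simple connectivity of $\partial\tau$ to make the side-labels globally consistent) and then recognizing $\Delta$ as $\Delta'^{\psi}$ with $\psi$ admissible is exactly the standard proof of this lemma. Note that the present paper gives no proof at all, quoting the result from \cite{BasakSwartz}, and your cut-and-reidentify construction is essentially the argument given there, so there is nothing to fix beyond filling in the bookkeeping you already outline.
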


\begin{definition}[Edge folding \cite{BasakSwartz}]
	Let $\sigma_1$ and $\sigma_2$ be two facets of a simplicial complex $\Delta$ whose intersection is an edge $uv$. A bijection $\psi : \sigma_1 \to \sigma_2$ is \textbf{ edge folding admissible} if $\psi(u)= u, \psi(v) = v$ and for all other vertices $y$ of $\sigma_1$, all paths of length two or less from $y$ to $\psi(y)$ go through either $u$ or $v$. As before, identify all faces $\rho_1\subseteq\sigma_1$ and $\rho_2 \subseteq \sigma_2$ such that $\psi : \rho_1\to \rho_2$ is a bijection. The complex obtained by removing the facet resulting from identifying $\sigma_1$ and $\sigma_2$ is denoted by $\Delta^\psi_{uv}$ and is called an \textbf{ edge folding} of $\Delta$ at $uv$. Further, $\Delta$ is an \textbf{ edge unfolding} of $\Delta^\psi_{uv}$.
\end{definition}
\noindent If $\Delta$ is a normal $d$-pseudomanifold and $\Delta^\psi_{uv}$ is obtained from $\Delta$ by an edge folding at $uv$, then 
\begin{equation} \label{edge folding g2}
	g_2(\Delta^\psi_{uv}) = g_2(\Delta)+\binom{d}{2}.
\end{equation}
A \textbf{missing triangle} of $\Delta$ is a triangle $abc$ such that $abc\notin \Delta$ but $\partial(abc)\in\Delta$. Similarly a \textbf{missing tetrahedron} of $\Delta$ is a tetrahedron $abcd$ such that $abcd\notin\Delta$ but $\partial(abcd)\in\Delta$.
Let $\sigma=abcv$ be a missing tetrahedron of $\Delta$, where $v$ is a vertex of $\sigma$. Then the triangle $abc$ is a missing triangle in $\lk (v,\Delta)$, and is denoted by $\sigma-v$. Further, A small neighborhood of $|\partial(abc)|$ in $|\lk (v,\Delta)|$ is an annulus if  $|\lk (v,\Delta)|$ is an orientable surface and a small neighborhood of $|\partial(abc)|$ in $| \lk (v,\Delta)|$ is either an annulus or a M\"{o}bius strip If $|\lk (v,\Delta)|$ is a non-orientable surface.

\begin{lemma}[\cite{BasakSwartz}]\label{lemma:missingtetra2}
	Let $\Delta$ be a 3-dimensional normal pseudomanifold. Let $\tau=abuv$ be a missing facet in $\Delta$ such that $(i)$ for $x\in\{a,b\}$, $\partial(\tau -x)$ separates  $\lk (x,\Delta),$ and $(ii)$ a small neighborhood of $|\partial(abv)|$ in $|\lk (u,\Delta)|$ is a M\"{o}bius strip. Then a small neighborhood of $|\partial(abu)|$ in $|\lk (v,\Delta)|$ is also a M\"{o}bius strip. Further, there exists $\Delta'$ a three-dimensional normal pseudomanifold such that  $\Delta = (\Delta')^\psi_{uv}$ is obtained from an edge folding at $uv \in \Delta'$  and $abuv$ is the removed facet.
\end{lemma}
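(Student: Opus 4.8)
The plan is to treat the two conclusions separately. Write $\tau=abuv$; since $\tau$ is a missing facet, every proper face of $\tau$ lies in $\Delta$ while $\tau\notin\Delta$, so for each $x\in\{a,b,u,v\}$ the triangle $\tau-x$ is a missing triangle of $\lk(x,\Delta)$, and $\{u,v\}$ (resp.\ $\{a,b\}$) is a non-adjacent pair on the cycle $\lk(ab,\Delta)$ (resp.\ $\lk(uv,\Delta)$), splitting it into two arcs.

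For the first assertion I would run a $\mathbb{Z}_2$-parity count on the embedded $2$-sphere $|\partial\tau|\subset|\Delta|$. Away from the four vertices $a,b,u,v$ the sphere $|\partial\tau|$ has exactly two local sides in $|\Delta|$: along the interior of each triangle of $\tau$ this is clear, and along the interior of an edge $e$ of $\tau$ the set $|\partial\tau|\cap|\Star(e,\Delta)|$ is a properly embedded disc (two triangles of $\tau$ meeting in a ``fold'' along $e$ inside the $3$-ball $|\Star(e,\Delta)|$), hence again two-sided. Thus the two local sides form a double cover of the $4$-punctured sphere $X:=|\partial\tau|\setminus\{a,b,u,v\}$, classified by a class $w\in H^1(X;\mathbb{Z}_2)$. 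A small loop $\gamma_x$ around the puncture $x$ is the circle $|\partial(\tau-x)|$ traversed once inside $|\lk(x,\Delta)|$, so its monodromy $\epsilon_x:=\langle w,\gamma_x\rangle\in\mathbb{Z}_2$ is $0$ if $\partial(\tau-x)$ has an annular neighbourhood in $\lk(x,\Delta)$ and $1$ if it has a M\"obius-strip neighbourhood. The loops around the four punctures of a $2$-sphere sum to zero in $H_1(X;\mathbb{Z}_2)$, so $\epsilon_a+\epsilon_b+\epsilon_u+\epsilon_v=0$. Hypothesis $(i)$ gives $\epsilon_a=\epsilon_b=0$ (a separating simple closed curve is two-sided), and $(ii)$ gives $\epsilon_u=1$; hence $\epsilon_v=1$, i.e.\ a small neighbourhood of $|\partial(abu)|$ in $|\lk(v,\Delta)|$ is a M\"obius strip.

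For the second assertion the plan is to ``unfold $\Delta$ at $uv$''. By $(i)$, $\partial(buv)$ separates $\lk(a,\Delta)$ into surfaces-with-boundary $\Sigma_a^1,\Sigma_a^2$ and $\partial(auv)$ separates $\lk(b,\Delta)$ into $\Sigma_b^1,\Sigma_b^2$. Introduce new vertices $a_1,a_2,b_1,b_2$ in place of $a,b$, and let $\Delta'$ be obtained from $\Delta$ by: deleting $\Star(a,\Delta)\cup\Star(b,\Delta)$; declaring $\lk(a_i,\Delta')$ to be $\Sigma_a^i$ with its boundary copy of $\partial(buv)$ relabelled $\partial(b_iuv)$ and filled by the triangle $b_iuv$, and $\lk(b_i,\Delta')$ to be $\Sigma_b^i$ filled by $a_iuv$; relabelling each face of the old stars of $a,b$ not on the cut configuration according to the side it lies on (faces on the cut configuration being duplicated); and finally adjoining the two tetrahedra $\sigma_1:=uva_1b_1$ and $\sigma_2:=uva_2b_2$ with all their faces. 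The index pairing is made coherent through the cycle $\lk(ab,\Delta)$, whose two arcs $P,Q$ between $u$ and $v$ record simultaneously which side of $\lk(a,\Delta)$ one is on near $b$ and which side of $\lk(b,\Delta)$ one is on near $a$; I pair $\Sigma_a^1$ with $\Sigma_b^1$ along $P$ and $\Sigma_a^2$ with $\Sigma_b^2$ along $Q$. With $\psi:\sigma_1\to\sigma_2$ the bijection fixing $u,v$ and sending $a_1\mapsto a_2$, $b_1\mapsto b_2$, the construction is arranged precisely so that identifying faces along $\psi$ and deleting the resulting facet recovers $\Delta$; thus $\Delta=(\Delta')^\psi_{uv}$ with removed facet $abuv$.

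It then remains to verify that $\Delta'$ is a normal $3$-pseudomanifold and that $\psi$ is edge-folding admissible. The links $\lk(a_i,\Delta')$ and $\lk(b_i,\Delta')$ are closed surfaces by construction (using $(i)$); links of the untouched vertices are unchanged up to relabelling; and connectivity of links of codimension-$\geq 2$ faces is inherited from $\Delta$. The delicate part is the links of $u$, of $v$ and of the edge $uv$: one checks that $\lk(uv,\Delta')$ is the single cycle gotten from $\lk(uv,\Delta)$ by splitting $a,b$ and reconnecting its two arcs through the edges $a_1b_1,a_2b_2$, and that $\lk(u,\Delta')$ (resp.\ $\lk(v,\Delta')$) is a closed surface folding onto $\lk(u,\Delta)$ (resp.\ $\lk(v,\Delta)$) --- and here the one-sidedness of $\partial(abv)$ in $\lk(u,\Delta)$ and of $\partial(abu)$ in $\lk(v,\Delta)$ established above is exactly what forces these cut-and-cap reconstructions to produce connected closed surfaces rather than splitting in two. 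Edge-folding admissibility of $\psi$ follows because $\lk(a_1,\Delta')\cap\lk(a_2,\Delta')=\overline{uv}$ and $\lk(b_1,\Delta')\cap\lk(b_2,\Delta')=\overline{uv}$, so no path of length at most two joins $a_1$ to $a_2$ or $b_1$ to $b_2$ except through $u$ or $v$. I expect the main obstacle to be this coherent bookkeeping: showing the ``side'' assignment can be chosen consistently for $a$ and $b$ at once (so that $\sigma_1,\sigma_2$ are genuine distinct facets meeting along $uv$), together with the careful verification of the links at $u$, $v$ and $uv$.
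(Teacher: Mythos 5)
The paper does not actually prove this lemma --- it is imported verbatim from \cite{BasakSwartz} --- so I am judging your argument on its own terms. Your proof of the first assertion is complete and correct, and it is a genuinely different (and slicker) route than the combinatorial link-tracing used in \cite{BasakSwartz} and in this paper's own proof of the companion Lemma~\ref{lemma:three mobius}: since the only non-manifold points of $|\Delta|$ are vertices, the two local sides of the $2$-sphere $|\partial\tau|$ do define a $\mathbb{Z}_2$-cover of the $4$-punctured sphere, the puncture monodromies are exactly the annulus/M\"obius dichotomy in the vertex links, and the relation $\epsilon_a+\epsilon_b+\epsilon_u+\epsilon_v=0$ with $\epsilon_a=\epsilon_b=0$, $\epsilon_u=1$ gives $\epsilon_v=1$.

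The second assertion, however, is only a plan, and the step you defer is precisely the mathematical content of the lemma. Your construction of $\Delta'$ (split $a,b$ along the separating curves $\partial(buv)\subset\lk(a,\Delta)$ and $\partial(auv)\subset\lk(b,\Delta)$, pair the sides through the two arcs $P,Q$ of $\lk(ab,\Delta)$, cap with $uva_1b_1$, $uva_2b_2$) is the right shape, and the admissibility of $\psi$ does follow as you say. But you never verify that $\Delta'$ is a normal pseudomanifold, and this is not routine bookkeeping: the danger point is $\lk(uv,\Delta')$. The circle $\lk(uv,\Delta)$ is cut by $a,b$ into two arcs $R,S$; the two neighbours of $a$ automatically go to opposite copies $a_1,a_2$ (they lie in the two triangles of $\lk(a,\Delta)$ adjacent to the edge $uv\subset\partial(buv)$), and likewise at $b$, so after adding the edges $a_1b_1,a_2b_2$ one gets either a single circle or two disjoint circles, depending on whether the side of $\lk(a,\Delta)$ containing the $R$-end at $a$ and the side of $\lk(b,\Delta)$ containing the $R$-end at $b$ agree or disagree under your $P/Q$ pairing. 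In the bad case $\Delta'$ has disconnected edge link and the conclusion fails, so one must \emph{prove} the good case occurs, and that is exactly where hypothesis $(ii)$ enters; your sentence ``the one-sidedness \dots is exactly what forces these cut-and-cap reconstructions to produce connected closed surfaces'' asserts this but gives no argument (and connectivity is in fact automatic --- the real issue is the single-circle condition at $uv$). The gap is fillable with the very tool you built for part one: tracking the two local sides of $|\partial\tau|$ across the three faces $abu$, $buv$, $auv$ identifies the monodromy $\epsilon_u$ with the parity that distinguishes the one-circle from the two-circle case, so $\epsilon_u=1$ forces $\lk(uv,\Delta')$ to be a single circle, after which the surface and admissibility checks go through as you outline. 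As written, though, the proof of the unfolding statement is incomplete.
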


\section{Rigidity Theory:}
In this section we give a short introduction of rigidity theory. Rigidity has became a direction for analyzing $f$-vectors. Here, we have discussed some results (cf. \cite{BasakSwartz, Kalai, NevoNovinsky, TayWhiteWhiteley, Zheng}) that we have used.

Let  $G=(V,E)$ be a simple graph. Let $g : V\rightarrow R^d$ be a function. Then $(G, g)$ is called rigid if there exists $\epsilon >0$ such that if $h : V\rightarrow R^d$ with  $||g(v) - h(v)|| < \epsilon$ for all $v\in V$ and $||g(v) - g(u)|| = ||h(v) - h(u)||$ for all $uv\in E$, then $||g(u) - g(v)|| = ||h(u) - h(v)||$ for all $u, v\in V$. 

The graph $G$ is called generically $d$-rigid if for generic choices of $f : V\rightarrow R^d$, $(G, f)$ is rigid. A simplicial complex $K$ is called $d$-rigid if $G(K)$ is generically $d$-rigid. 
Note that it is immediate from the definition that if $G$ is generically $d$-rigid and $e$ is an
edge whose vertices are in $G$, but $e\not \in G$, then $G\cup \{e\}$ is generically $d$-rigid.  Further, from  \cite{Fogelsanger, Kalai}, we know the following. 

%
%
\begin{theorem} [\cite{Fogelsanger, Kalai}] \label{lemma:d+1 rigid}
Every Normal $d$-pseudomanifold is $(d + 1)$-rigid if $d\geq 2$.
\end{theorem}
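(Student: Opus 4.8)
The plan is to prove the statement by induction on the dimension $d$, using two classical facts of rigidity theory. First, the \emph{coning lemma}: a graph $G$ on vertex set $W$ is generically $k$-rigid if and only if the cone $u\ast G$, obtained by adjoining a new vertex $u$ adjacent to every vertex of $W$, is generically $(k+1)$-rigid. Second, the \emph{gluing lemma}: if $G_1$ and $G_2$ are generically $k$-rigid graphs with $|V(G_1)\cap V(G_2)|\geq k$, then $G_1\cup G_2$ is generically $k$-rigid; and, as already noted in the text, adjoining an edge to a generically $k$-rigid graph preserves generic $k$-rigidity.

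For the base case $d=2$ I would first observe that a normal $2$-pseudomanifold $K$ is a triangulated closed surface: the link of each vertex is a connected $1$-dimensional complex in which every vertex has degree two, hence a single cycle. Then I would invoke the classical fact that the graph of every triangulated closed surface is generically $3$-rigid. I expect this to be the main obstacle. For $|K|\cong\mathbb{S}^2$ it follows from the Cauchy--Dehn infinitesimal rigidity theorem together with Steinitz's theorem: $K$ is realizable as a convex simplicial $3$-polytope, whose skeleton is then infinitesimally $3$-rigid, so $G(K)$ is generically $3$-rigid. The positive-genus cases need more; the cleanest uniform route is Fogelsanger's cycle-decomposition argument, which in the write-up I would either reproduce or simply cite from \cite{Fogelsanger}.

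For the inductive step, let $d\geq 3$ and assume every normal $(d-1)$-pseudomanifold is generically $d$-rigid; let $K$ be a normal $d$-pseudomanifold. Since $K$ is connected (the link of the empty face is $K$ itself and has codimension $d+1\geq 2$), we may order its vertices $v_1,\dots,v_n$ so that each $v_i$ with $i\geq 2$ is adjacent in $G(K)$ to some $v_j$ with $j<i$. For every vertex $v$, the link $\lk(v,K)$ is a normal $(d-1)$-pseudomanifold, hence generically $d$-rigid by the inductive hypothesis, so by the coning lemma $G(\Star(v,K))=v\ast G(\lk(v,K))$ is generically $(d+1)$-rigid. Now set $H_i:=\bigcup_{\ell\leq i}G(\Star(v_\ell,K))$, so that $H_n=G(K)$, and prove by a secondary induction that each $H_i$ is generically $(d+1)$-rigid: the case $i=1$ is immediate, and at step $i+1$ one picks $j<i+1$ with $v_jv_{i+1}\in K$ and notes that $\{v_j,v_{i+1}\}\cup V(\lk(v_jv_{i+1},K))$ is contained in $V(\Star(v_j,K))\cap V(\Star(v_{i+1},K))$, hence in $V(H_i)\cap V(\Star(v_{i+1},K))$; since $\lk(v_jv_{i+1},K)$ is a normal $(d-2)$-pseudomanifold it has at least $d$ vertices, so this common vertex set has at least $d+2\geq d+1$ elements and the gluing lemma yields that $H_{i+1}$ is generically $(d+1)$-rigid. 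Thus $G(K)=H_n$ is generically $(d+1)$-rigid, i.e.\ $K$ is $(d+1)$-rigid. Beyond the routine vertex-counting for the overlaps, the whole difficulty of the proof is concentrated in the two-dimensional base case discussed above.
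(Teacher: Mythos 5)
The paper does not prove this statement at all: it is imported wholesale from \cite{Fogelsanger} and \cite{Kalai}, so the only fair comparison is with those sources. Your argument is correct and is essentially Kalai's original route: cone lemma plus gluing lemma, an induction on $d$ over the stars of vertices ordered along a connected enumeration, with the overlap count justified because the link of an edge is a normal $(d-2)$-pseudomanifold and hence has at least $d$ vertices; this is exactly the scheme that Bagchi and Datta \cite{BagchiDatta} write out for normal pseudomanifolds, and the verification that links of vertices (resp.\ edges) of a normal $d$-pseudomanifold are again normal pseudomanifolds makes the induction legitimate. The one point to be clear-eyed about is the base case: your reduction needs generic $3$-rigidity of \emph{all} triangulated closed surfaces, not just spheres, since for $d=3$ the vertex links can be arbitrary closed surfaces (indeed $\mathbb{RP}^2$ links are precisely what this paper studies), so Cauchy--Dehn plus Steinitz only disposes of the spherical case and you must still quote Fogelsanger (or another proof of surface rigidity) for the positive-genus and non-orientable cases --- which you acknowledge. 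That is acceptable here, since surface rigidity is a strictly weaker input than the full theorem, but it means your proof does not replace the deep ingredient; Fogelsanger's own argument is genuinely different in structure (decomposition into minimal cycles, handling all $d\geq 2$ uniformly rather than by induction on links), and that is what one must reproduce if the aim is a self-contained proof.
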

%
%
From \cite{NevoNovinsky, TayWhiteWhiteley}, we have the following cone lemma.

\begin{lemma}[\cite{NevoNovinsky, TayWhiteWhiteley}]\label{lemma:cone lemma} Let $K$ be $d$-rigid and let $C(K)$ be the cone of $K$
with cone vertex $v$. Then $C(K)$ is $(d + 1)$-rigid. 
\end{lemma}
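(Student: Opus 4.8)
The statement is Whiteley's coning theorem, and the cleanest route is to exhibit one infinitesimally rigid (though non-generic) framework of $C(K)$ in $\mathbb{R}^{d+1}$ and then invoke the openness of infinitesimal rigidity. Write $G=G(K)=(V,E)$ and let $G'=G(C(K))$ be $G$ together with a new vertex $v$ joined to every $u\in V$. If $|V|\le d$, then $d$-rigidity of $K$ forces $G$ to be complete, so $G'$ is complete on $\le d+1$ vertices and is $(d+1)$-rigid outright; so assume $|V|\ge d+1$. Fix a generic $p\colon V\to\mathbb{R}^d$; since $G$ is generically $d$-rigid, $(G,p)$ is infinitesimally rigid, i.e.\ its space of infinitesimal flexes is exactly the $\binom{d+1}{2}$-dimensional space of infinitesimal isometries of $\mathbb{R}^d$. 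Now lift to $\mathbb{R}^{d+1}$ via $\hat p(v)=0$ and $\hat p(u)=(p(u),1)$ for $u\in V$, placing $V$ on the hyperplane $x_{d+1}=1$ and the apex at the origin.

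The heart of the argument is to read off the infinitesimal flexes of $(G',\hat p)$. Every flex $m$ of $(G',\hat p)$ decomposes uniquely as $m=m_0+\tau$, where $\tau$ is the (always-flexing) constant field equal to $m(v)$ and $m_0$ is a flex with $m_0(v)=0$; the constant fields form a $(d+1)$-dimensional space meeting $\{m_0(v)=0\}$ trivially. For a flex with $m(v)=0$, the flexing condition on a cone edge $\{v,u\}$ reads $m(u)\cdot\hat p(u)=0$; writing $m(u)=(m'(u),t(u))$ this just says $t(u)=-\,m'(u)\cdot p(u)$, so the last coordinate is slaved to the first $d$. On an edge $\{a,b\}\in E$ we have $\hat p(a)-\hat p(b)=(p(a)-p(b),0)$, so the condition collapses to $(m'(a)-m'(b))\cdot(p(a)-p(b))=0$, which is precisely the flexing condition for $(G,p)$ in $\mathbb{R}^d$. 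This gives a linear isomorphism between the flex space of $(G,p)$ and the space of flexes of $(G',\hat p)$ vanishing at $v$; hence the latter has dimension $\binom{d+1}{2}$, and the full flex space of $(G',\hat p)$ has dimension
\[
\binom{d+1}{2}+(d+1)=\binom{d+2}{2},
\]
which is exactly the dimension of the trivial flexes in $\mathbb{R}^{d+1}$ (as $|V|+1\ge d+2$). Therefore $(G',\hat p)$ is infinitesimally rigid.

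To conclude, infinitesimal rigidity of a fixed graph is a Zariski-open condition on the placement of its vertices: it amounts to the rigidity matrix attaining its generic maximal rank. Since $\hat p$ is one placement at which $(G',\hat p)$ is infinitesimally rigid, the generic placements of $G'$ in $\mathbb{R}^{d+1}$ are infinitesimally rigid as well; that is, $G'$ is generically $(d+1)$-rigid, so $C(K)$ is $(d+1)$-rigid. I do not anticipate a genuine obstacle here — all the content sits in the elementary coordinate computation of the middle paragraph — the only care needed is with the degenerate cases $|V|\le d$ and with confirming that the non-generic witness $\hat p$ indeed certifies the relevant open condition.
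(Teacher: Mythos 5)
Your proof is correct. Note that the paper does not prove this lemma at all --- it is quoted with a citation to Tay--White--Whiteley and Nevo--Novinsky --- and your argument is exactly the standard coning proof from that literature: place the base generically on the hyperplane $x_{d+1}=1$, put the apex at the origin, identify the flexes fixing the apex with the flexes of the base framework via $t(u)=-m'(u)\cdot p(u)$, count dimensions, and conclude by openness of maximal rank of the rigidity matrix. The dimension count $\binom{d+1}{2}+(d+1)=\binom{d+2}{2}$ and the handling of the degenerate case $|V|\le d$ are both right; the only ingredients you take as known are the standard equivalences (generic rigidity coincides with generic infinitesimal rigidity, and infinitesimal rigidity at one placement implies it at generic placements), which is entirely appropriate here.
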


Further, we have the following lower bounds for $g_2$ due to Kalai (cf. \cite{Kalai}).

\begin{lemma}[\cite{Kalai}]\label{lemma:rigid subcomplex}
	 Let $K$ be $(d + 1)$-rigid $d$-dimensional normal pseudomanifold and $\sigma$ be a $(d + 1)$-rigid subcomplex of $K$. Then $g_2(\sigma)\leq g_2(K)$.
\end{lemma}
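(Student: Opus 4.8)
The plan is to reinterpret $g_2$ as a nullity in the generic $(d+1)$-rigidity matroid and then to invoke a single monotonicity property of that matroid. Fix a generic placement of the vertex set $V$ of $K$ in $\mathbb{R}^{d+1}$, let $\mathcal R$ be the associated rigidity matroid on the edge set of the complete graph on $V$, write $r$ for its rank function, and set $\nu(S) = |S| - r(S)$ for the nullity of an edge set $S$. The one structural input I would quote -- it is precisely the Maxwell--Asimow--Roth count underlying Theorem~\ref{lemma:d+1 rigid} and used in \cite{Kalai, NevoNovinsky, TayWhiteWhiteley, Zheng} -- is that for a vertex subset $U$ with $|U| \ge d+1$ the restriction of $\mathcal R$ to the edges on $U$ is the generic $(d+1)$-rigidity matroid on $U$ and has rank $(d+1)|U| - \binom{d+2}{2}$; hence a graph on $U$ is generically $(d+1)$-rigid exactly when its edge set attains this rank. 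Consequently, for any $(d+1)$-rigid subcomplex $X$ of $K$ with $f_0(X) \ge d+1$ one has
\[
	g_2(X) \;=\; f_1(X) - (d+1)f_0(X) + \binom{d+2}{2} \;=\; |E(G(X))| - r\bigl(E(G(X))\bigr) \;=\; \nu\bigl(E(G(X))\bigr),
\]
so that both $g_2(K)$ and $g_2(\sigma)$ are realised as nullities of edge sets inside the \emph{same} matroid $\mathcal R$.

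With this translation in hand the inequality is formal. Since $\sigma$ is a subcomplex of $K$ we have $E(G(\sigma)) \subseteq E(G(K))$, and the nullity of a matroid is monotone under inclusion: if $T \subseteq S$ then subadditivity of the rank gives $r(S) \le r(T) + |S \setminus T|$, whence $\nu(S) = |S| - r(S) \ge |T| - r(T) = \nu(T)$. Applying this with $T = E(G(\sigma))$ and $S = E(G(K))$, and reading off the two sides via the displayed identity, yields $g_2(\sigma) = \nu\bigl(E(G(\sigma))\bigr) \le \nu\bigl(E(G(K))\bigr) = g_2(K)$.

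The step I expect to carry the weight is the identification $g_2 = \nu$ in the first paragraph, since it is the only part that is not pure matroid bookkeeping: it relies on the rank formula for the generic rigidity matroid (imported, not reproved here) and it uses both rigidity hypotheses -- $(d+1)$-rigidity of $K$ to force $r(E(G(K))) = (d+1)f_0(K) - \binom{d+2}{2}$, and of $\sigma$ to force the analogous equality, without which $\nu(E(G(\sigma)))$ is strictly smaller than $g_2(\sigma)$ and the conclusion can fail. One should also dispatch the degenerate range $f_0(\sigma) \le d+1$ separately: there the rigidity matroid on $V(\sigma)$ is free, so a $(d+1)$-rigid $\sigma$ is a complete graph, $\nu(E(G(\sigma))) = 0$, and the statement collapses to the Lower Bound inequality $g_2(K) \ge 0$ -- a situation that does not arise in the applications, where $\sigma$ always has at least $d+2$ vertices. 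Beyond that one import from rigidity theory, no information about the topology of $K$ or $\sigma$ enters, only the inclusion $\sigma \subseteq K$.
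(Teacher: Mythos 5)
Your argument is correct and is essentially the standard (indeed Kalai's own) route: for a $(d+1)$-rigid graph on at least $d+1$ vertices, $g_2$ equals the corank (stress-space dimension) of its edge set in the generic $(d+1)$-rigidity matroid, and corank is monotone under inclusion of edge sets; the paper gives no proof of this lemma, simply citing \cite{Kalai}, so there is nothing for your proof to diverge from. One small caution: in the degenerate range your dispatch is not quite as stated, since for a $(d+1)$-rigid complete graph on fewer than $d+1$ vertices the formula $g_2(\sigma)=f_1(\sigma)-(d+1)f_0(\sigma)+\binom{d+2}{2}$ is strictly positive while $g_2(K)$ may be $0$, so the literal inequality can fail there; the restriction to subcomplexes with at least $d+1$ vertices, which you flag and which holds in all of the paper's applications (where the subcomplex is $\Star(v)$ together with extra edges, hence has at least $d+2$ vertices), is genuinely needed rather than merely convenient.
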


\begin{lemma}[\cite{Kalai}]\label{lemma:lower bound}
If $K$ is a normal $3$-pseudomanifold and $v\in K$ is a vertex  then $g_2(K)\geq g_2(\Star (v)) = g_2(\lk (v))$.
\end{lemma}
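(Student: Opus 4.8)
The plan is to combine the rigidity facts recalled above with a routine $f$-vector computation for the cone.

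First, by Theorem~\ref{lemma:d+1 rigid} (with $d=3$), the normal $3$-pseudomanifold $K$ is $4$-rigid. Fix a vertex $v\in K$. By definition of a normal $3$-pseudomanifold, $\lk(v,K)$ is a connected closed surface, hence a normal $2$-pseudomanifold; so Theorem~\ref{lemma:d+1 rigid} (now with $d=2$) shows that $\lk(v,K)$ is $3$-rigid. Since $\Star(v,K)$ is precisely the cone over $\lk(v,K)$ with apex $v$, Lemma~\ref{lemma:cone lemma} gives that $\Star(v,K)$ is $4$-rigid. As $\Star(v,K)$ is a subcomplex of the $4$-rigid $3$-dimensional normal pseudomanifold $K$, Lemma~\ref{lemma:rigid subcomplex} applies and yields $g_2(\Star(v,K))\leq g_2(K)$.

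It then remains to verify the identity $g_2(\Star(v,K))=g_2(\lk(v,K))$. Put $n=f_0(\lk(v,K))$ and $e=f_1(\lk(v,K))$. Coning off $v$ adds one vertex and one edge $vw$ for each vertex $w$ of the link, so $f_0(\Star(v,K))=n+1$ and $f_1(\Star(v,K))=e+n$. Using the identity $g_2(\Delta)=f_1(\Delta)-(d+1)f_0(\Delta)+\binom{d+2}{2}$ for a $d$-dimensional complex $\Delta$ — so $d=3$ for $\Star(v,K)$ and $d=2$ for $\lk(v,K)$ — we obtain $g_2(\Star(v,K))=(e+n)-4(n+1)+10=e-3n+6=g_2(\lk(v,K))$, which completes the argument.

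There is no genuine obstacle here: the inequality is an immediate consequence of the $(d+1)$-rigidity of normal pseudomanifolds, the cone lemma, and Kalai's subcomplex bound, while the equality $g_2(\Star(v,K))=g_2(\lk(v,K))$ is just the elementary observation that coning preserves $g_2$. The only points worth checking are that $\lk(v,K)$ is a normal $2$-pseudomanifold, so that Theorem~\ref{lemma:d+1 rigid} is applicable to it, and that $\Star(v,K)$ is literally a cone with apex $v$; both are immediate from the definitions in Section~2.
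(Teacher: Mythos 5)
Your argument is correct and is essentially the same rigidity argument the paper itself uses: the paper states the lemma as a citation to Kalai, but the discussion preceding Corollary~\ref{missingedges} runs exactly your chain (Theorem~\ref{lemma:d+1 rigid} for $K$ and for the link, Lemma~\ref{lemma:cone lemma} for $\Star(v)$, Lemma~\ref{lemma:rigid subcomplex} for the inequality), and your $f$-vector check that coning preserves $g_2$, namely $g_2(\Star(v))=(e+n)-4(n+1)+10=e-3n+6=g_2(\lk(v))$, is the correct elementary justification of the equality.
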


Let $K$ be a normal $3$-pseudomanifold and $v\in K$ be a vertex of $K$. Then $\lk (v)$ is a normal 2-pseudomanifold, and hence by Lemma \ref{lemma:d+1 rigid}, $\lk (v)$ is $3$-rigid. Further, from Lemma \ref{lemma:cone lemma}, we have $\Star (v)$ is 4-rigid, i.e.,  $G(\Star (v))$ is 4-rigid.
Let $e_1,e_2, \dots ,e_n$ be the edges in $K$ such that  whose vertices are in $G(\Star (v))$, but the edges $e_1,e_2, \dots ,e_n$ are not in $G(\Star (v))$. Then $G(\Star (v))\cup \{e_1,e_2, \dots ,e_n\}$ is also 4-rigid. Since $G(\Star (v)\cup \{e_1,e_2, \dots ,e_n\})=G(\Star (v))\cup \{e_1,e_2, \dots ,e_n\}$, the subcomplex  $\Star (v)\cup \{e_1,e_2, \dots ,e_n\}$ is also 4-rigid. Therefore, by Lemma \ref{lemma:rigid subcomplex}, we have $g_2(K)\geq g_2(\Star (v)\cup \{e_1,e_2, \dots ,e_n\})=g_2(\Star (v))+n= g_2(\lk (v))+n$. Thus, we have the following result.

\begin{corollary}\label{missingedges}
Let $K$ be a normal $3$-pseudomanifold and $v\in K$ be a vertex in $K$. Let $e_1,e_2, \dots ,e_n$ be the edges in $K$ such that  the vertices of  $e_1,e_2, \dots ,e_n$ are in $\lk (v)$, but the edges $e_1,e_2, \dots ,e_n$ are not in $\lk (v)$. Then $g_2(K)\geq g_2(\lk (v))+n$.
\end{corollary}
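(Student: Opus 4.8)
The plan is to bootstrap from the rigidity facts already collected, essentially reproducing the argument in the paragraph preceding the statement. First I would note that $\lk (v,K)$ is a normal $2$-pseudomanifold, so by Theorem~\ref{lemma:d+1 rigid} it is $3$-rigid. Writing $\Star (v,K)$ as the cone over $\lk (v,K)$ with apex $v$, the cone lemma (Lemma~\ref{lemma:cone lemma}) then gives that $\Star (v,K)$ is $4$-rigid, i.e.\ its graph $G(\Star (v,K))$ is generically $4$-rigid.

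Next I would adjoin the edges $e_1,\dots,e_n$. Each $e_i$ has both endpoints among the vertices of $\lk (v,K)$, hence among the vertices of $\Star (v,K)$; on the other hand $e_i\notin \lk (v,K)$ and $e_i$ does not contain $v$, so $e_i\notin \Star (v,K)$ as well. (This bookkeeping is the one mildly delicate point: it is precisely what makes the constant $n$ in the corollary agree with the constant attached to the star, since for edges avoiding $v$, membership in $\Star (v,K)$ is the same as membership in $\lk (v,K)$, while any edge through $v$ with the other endpoint in $V(\lk (v,K))$ already lies in $\Star (v,K)$.) Thus $e_1,\dots,e_n$ are edges on the vertex set of $G(\Star (v,K))$ that are not present in it, and since appending such an edge to a generically $4$-rigid graph keeps it generically $4$-rigid, the graph $G(\Star (v,K))\cup\{e_1,\dots,e_n\}$ is $4$-rigid. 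This graph equals $G(\Delta)$ for the subcomplex $\Delta:=\Star (v,K)\cup\{e_1,\dots,e_n\}$ of $K$, so $\Delta$ is $4$-rigid.

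Finally I would invoke monotonicity of $g_2$. By Theorem~\ref{lemma:d+1 rigid}, $K$ is itself $4$-rigid, so Lemma~\ref{lemma:rigid subcomplex} applied to the $4$-rigid subcomplex $\Delta\subseteq K$ gives $g_2(K)\geq g_2(\Delta)$. Adjoining $n$ edges and no new vertices or higher-dimensional faces leaves $f_0$ unchanged and raises $f_1$ by $n$, so $g_2(\Delta)=g_2(\Star (v,K))+n$; and since $\Star (v,K)$ is the cone on $\lk (v,K)$ one has $f_0(\Star (v,K))=f_0(\lk (v,K))+1$ and $f_1(\Star (v,K))=f_1(\lk (v,K))+f_0(\lk (v,K))$, whence $g_2(\Star (v,K))=g_2(\lk (v,K))$ (this identity is also part of Lemma~\ref{lemma:lower bound}). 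Combining these yields $g_2(K)\geq g_2(\lk (v,K))+n$, as claimed. There is no substantial obstacle here beyond the counting observation in the second step; everything else is a direct assembly of the rigidity lemmas above.
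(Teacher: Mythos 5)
Your proposal is correct and follows essentially the same route as the paper: rigidity of the link (Theorem~\ref{lemma:d+1 rigid}), the cone lemma to get $4$-rigidity of $\Star(v)$, adjoining the missing edges while preserving generic rigidity, and then Lemma~\ref{lemma:rigid subcomplex} together with $g_2(\Star(v))=g_2(\lk(v))$. The only addition is your explicit check that each $e_i$ avoids $v$ and hence lies outside $\Star(v)$, a bookkeeping point the paper leaves implicit.
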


\section{Normal $3$-pseudomanifolds with no singular vertices}

Let $K$ be a normal $3$-pseudomanifold with $g_2(K)\leq 9$ and $K$ has no singular vertices. It is clear from \cite{Walkup} that $K$ is a $3$-sphere. In this section we give the complete characterization of such $K$. We know that  $g_2(K)=0$ implies $K$ is a stacked sphere, i.e., $K$ is a connected sum of boundary of  4-simplices (or $K$ is obtained from a boundary of a 4-simplex by facets subdivisions.) Thus we assume that $1\leq g_{2}(K)\leq 9$. 

If $K$ has a missing tetrahedron $\sigma$ then by Lemma \ref{lemma:missingtetra1}, $K$ was formed using handle addition or connected sum.  If $K$ was formed using handle addition then $g_2(K)\geq 10$, which is a contradiction. Thus,  $K=K_1\#K_2$ where $\sigma\in K_1$ as well as $\sigma\in K_2$. So, there is a finite collection of normal $3$-pseudomanifolds $K_1,\dots,K_n$ such that $K=K_1\#\cdots\# K_n$ where each of $K_i$ has no missing tetrahedron and $g_2(K_i)\leq 9$ for all $1\leq i\leq n$. Then we can proceed with $K_i$. So, without loss of generality we can assume that $K$ has no missing tetrahedron.

Let $uv\in K$ be an edge and $d(uv)=3$. If $\lk (uv)=\partial(abc)$. If possible, let $abc \in K$. Then $\partial(uabc)$ and $\partial(vabc) \in K$. Then $uabc, vabc\in K$ and hence $uabc$, $vabc$, $uvab$, $uvbc$, $uvac$ are all in $K$, i.e., $\partial(uvabc) \subset K$. But this is possible only if $\partial(uvabc) = K$ which is a contradiction as $K$ is a manifold with $g_2(K)\geq 1$. Thus, $abc \not \in K$. Let $K'=(K-\{\alpha\in K: uv \leq \alpha\})\cup\{abc, uabc, vabc\}$. Since $abc\not \in K$, it is easy to see that $|K'|\cong |K|$, $f_0(K')=f_0(K)$ and $f_1(K')=f_1(K)-1$ and we get $g_2(K')=g_2(K)-1$. This combinatorial operation is called a bistellar $2$-move.  Note that $K'$ may have a missing tetrahedron. The reverse of this operation is the following

\smallskip

\noindent{\bf{Bistellar $1$-move:}} Let $K$ be a normal $3$-pseudomanifold  (may have  a missing tetrahedron) and $abc\in K$ is a $2$-simplex. Let $\lk (abc)=\{u,v\}$ where $u,v$ are two vertices in $K$ and $uv\not\in K$. Let $K'=(K-\{abc,uabc,vabc\})\cup\{uvab,uvbc,uvac\}$. It is easy to see that $|K'|\cong |K|$, $f_0(K')=f_0(K)$ and $f_1(K')=f_1(K)+1$ and we get $g_2(K')=g_2(K)+1$.

\smallskip

Let $uv\in K$ and $d(uv)=n$. If $\lk (u)\cap \lk (v)-\lk (uv)$ is an empty set then contract the edge $uv$ (cf. Definition \ref{Edge contraction}) and we get a new complex $K'$ such that $|K|\cong |K'|$ with $g_2(K')=g_2(K)-(n-3)$. If $n\geq 4$ then $g_2(K')\leq g_2(K)-1$.
This combinatorial operation is called edge contraction.   Note that $K'$ may have  a missing tetrahedron.   Now, we define the reverse of this operation.

\smallskip

\noindent{\bf{Edge expansion:}} Let $K$ be a normal $3$-pseudomanifold (may have  a missing tetrahedron). Let $w\in K$ and $C_n(u_1,u_2, \dots ,u_n)$ be an $n$-cycle in $\lk (w)$. Then we retriangulate the  $3$-ball $\Star w$ by removing $w$ and inserting $n$ tetrahedra $uvu_{1}u_2,uvu_{2}u_3, \dots ,uvu_{n}u_1$ and then coning off the two hemispheres formed by the $\lk (w)$ and the circle $C_n(u_1,u_2, \dots ,u_n)$ by $u$ and $v$ respectively. Let $K'$ be the  resulting complex then $g_2(K')=g_2(K)+n-3$ and $|K|'\cong |K|$.
If $d(uv)\geq 4$, i.e.,  $n\geq 4$  then $g_2(K') \geq g_2(K)+1$. If there is some $x \in \lk (w)$ such that $\lk (xw)=C_n(u_1,u_2, \dots ,u_n)$ then we say $K'$ is obtained from $K$ by {\em central retriangulation} of $\Star (xw)$ with center $u$ and renaming $w$ by $v$. Thus central retriangulation is a special case of edge expansion.

\smallskip

 Let $w\in K$  be a vertex and $\partial (abc) \in \lk (w)$. If $abc\not \in K$ then we retriangulate the  $3$-ball $\Star w$ by removing $w$ and inserting $abc$ and then coning off the two spheres formed by the link of $w$ and $abc$. Let $K'$ be the  resulting complex then $g_2(K')=g_2(K)-1$ and $|K|'\cong |K|$. We call this operation as `two facets insertion'.   Note that $K'$ may have a missing tetrahedron. Now, we define the reverse of this operation.
 
 \smallskip
  
\noindent{\bf{Two facets contraction:}} Let $K$ be a normal $3$-pseudomanifold (may have  a missing tetrahedron). Let $u$ and $v$ be two vertices in $K$ such that $uv\not\in K$ and $\Star (u)\cap \Star (v)=abc$. Then we retriangulate the 3-ball $\Star (u)\cup \Star (v)$ by removing $u,v,abc$ and coning off the boundary of $\Star (u)\cup \Star (v)$. Let $K'$ be the  resulting complex then $g_2(K')=g_2(K)+1$ and $|K'|\cong |K|$. Note that the combinatorial operation `two facets contraction' is just a combination of a bistellar $1$-move and an edge contraction.

\begin{proposition}\cite[Lemma $10.8$]{Walkup}\label{lemma:g_2<10}
Let $K$ be a closed connected 3-manifold such that $(i)$ $K$ is not a boundary of a $4$-simplex, $(ii)$ $K$ has no missing tetrahedron and $(iii)$ there is no $K'$  with  $|K'|\cong |K|$ and $g_2(K')<g_2(K)$, where $K'$ is obtained from $K$ by  bistellar $2$-move,  edge contraction or two facets insertion. Then $f_1(K) > 4 f_0(K)$, i.e., $g_2(K) > 10$.
\end{proposition}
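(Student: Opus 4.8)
The plan is to squeeze strong structural information out of hypotheses $(i)$--$(iii)$ and then to convert it into a lower bound on the average vertex degree; since $g_2(K)=f_1(K)-4f_0(K)+10$, the conclusion $g_2(K)>10$ is literally the assertion that this average exceeds $8$. For the structural reductions: first, $(i)$ and $(ii)$ force $g_2(K)\ge 1$, because a stacked $3$-sphere is either $\partial\Delta^4$ (excluded by $(i)$) or a connected sum of at least two copies, and the first stellar subdivision already produces a persistent missing tetrahedron (excluded by $(ii)$). With $g_2(K)\ge 1$ in hand, the computation in the paragraph preceding Proposition \ref{lemma:g_2<10} shows that any edge $uv$ with $d(uv)=3$ has $\lk(uv)=\partial(abc)$ with $abc\notin K$, so a bistellar $2$-move is available and strictly decreases $g_2$; hence $(iii)$ gives $d(uv)\ge 4$ for every edge. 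Since $\lk(vw,K)=\lk(w,\lk(v,K))$ and each vertex link is a $2$-sphere, this means every vertex of every vertex link has degree $\ge 4$ there, and a triangulated $2$-sphere of minimum degree $\ge 4$ has at least $6$ vertices; so $d(v)\ge 6$ for all $v$. Next, all links being spheres, Lemma \ref{homeomorphic} applies to any edge with $\lk(u)\cap\lk(v)=\lk(uv)$, and contracting such an edge (of degree $\ge 4$) strictly decreases $g_2$; so $(iii)$ forces $\lk(u)\cap\lk(v)\supsetneq\lk(uv)$ for every edge $uv$, and (using $(ii)$ to rule out that the extra face is an edge or triangle, which would create a missing tetrahedron or an impossible triangle in a link) this means every edge of $K$ is an edge of some missing triangle. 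Finally, the two-facets-insertion clause of $(iii)$ says no missing triangle $abc$ admits a vertex $w$ with $abw,bcw,caw\in K$, and $(ii)$ forbids missing tetrahedra.

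Now for the count. For a vertex $v$ let $m(v)$ be the number of missing triangles of $K$ through $v$; equivalently $m(v)$ is the number of edges $xy\in K$ with $x,y\in\lk(v)$ but $xy\notin\lk(v)$. Corollary \ref{missingedges} with $g_2(\lk(v))=0$ gives $m(v)\le g_2(K)$ for every $v$. Writing $M$ for the total number of missing triangles, incidence counting gives $\sum_v m(v)=3M$, and the fact that every edge lies in a missing triangle gives $3M\ge f_1(K)$. Combining, $f_1(K)\le f_0(K)\,g_2(K)=f_0(K)\bigl(f_1(K)-4f_0(K)+10\bigr)$, which rearranges to $g_2(K)\ge (4f_0(K)-10)/(f_0(K)-1)$, a quantity strictly below $4$. \textbf{This is the main obstacle}: the crude global inequalities fall far short of $g_2(K)>10$, and the deficit is not a matter of being slightly wasteful but of genuinely missing the local structure; one cannot reach the conclusion without analysing the vertices of small degree in detail.

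The remaining (and technically heaviest) step is the local analysis, essentially due to Walkup. For a vertex $v$ with $d(v)\in\{6,7,8\}$ the sphere $\lk(v)$ has very few vertices and only a short list of combinatorial types, each of minimum degree $\ge 4$. One examines, type by type, how such a link can sit inside $K$ given that every edge of $K$ among $N(v)$ must lie in a missing triangle, that no missing triangle is completable to a near-tetrahedron, and that there are no missing tetrahedra; already for $d(v)=6$ this forces the three ``diagonals'' of the octahedral link into $K$, and similar rigidity appears in the other small cases. These forced extra edges drive up the degrees of the vertices near $v$, so the low-degree vertices cannot be too plentiful nor too clustered. Organising the bookkeeping — for instance as a discharging argument on $\sum_v\bigl(d(v)-8\bigr)=2f_1(K)-8f_0(K)$, where vertices of degree $>8$ carry positive charge and each vertex of degree $\le 8$ is compensated by the high-degree vertices its link forces into $K$ — one concludes $2f_1(K)-8f_0(K)>0$, i.e.\ $f_1(K)>4f_0(K)$ and $g_2(K)>10$. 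I expect the bulk of the work, and all of the casework, to live in this last step; the reductions in the first paragraph are routine consequences of the definitions recalled before the statement.
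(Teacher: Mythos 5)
The first thing to observe is that the paper does not prove Proposition \ref{lemma:g_2<10} at all: it is quoted as Lemma $10.8$ of Walkup's paper, so there is no in-paper argument to measure yours against. Your first two paragraphs are correct and essentially routine given the surrounding text: $(i)$--$(ii)$ give $g_2(K)\geq 1$; $(iii)$ then forces $d(uv)\geq 4$ for every edge (via the bistellar $2$-move discussion) and hence $d(v)\geq 6$ for every vertex; every edge lies in a missing triangle (the case where the extra face of $\lk (u)\cap \lk (v)$ is a chord of $\lk (uv)$ being excluded because it would create a missing tetrahedron); and Corollary \ref{missingedges}, with $g_2(\lk (v))=0$, bounds the number of missing triangles through each vertex by $g_2(K)$. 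Your own computation then shows that this global count only yields $g_2(K)\geq (4f_0-10)/(f_0-1)<4$, and you correctly flag this as the obstacle.

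But the third paragraph, where the inequality $g_2(K)>10$ would actually have to be produced, is a plan rather than a proof. Phrases such as ``one examines, type by type, how such a link can sit inside $K$'' and ``organising the bookkeeping \dots one concludes $2f_1(K)-8f_0(K)>0$'' state intent without content: no enumeration of the admissible links for $d(v)=7,8$ is given, no discharging rules are specified, and nothing is verified about how the edges forced by a low-degree vertex are shared among neighbouring low-degree vertices --- which is exactly where the difficulty sits, since your paragraph-two bound shows that any argument seeing only one vertex star at a time (as Corollary \ref{missingedges} does) cannot get past $g_2\approx 4$. As it stands, the proposal reduces the statement to its hard core and then defers to Walkup; that is a genuine gap, and closing it would require reproducing Walkup's detailed case analysis (which is why the paper simply cites \cite[Lemma 10.8]{Walkup} instead of proving it).
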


\begin{theorem}
	If $K$ is a normal $3$-pseudomanifold with $g_2(K)\leq 9$ and $K$ has no singular vertices then $K$ is a $3$-sphere and is obtained from some boundary of $4$-simplices by a sequence of operations connected sum,  bistellar $1$-move, edge contraction and edge expansion. 
\end{theorem}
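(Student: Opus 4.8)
The plan is to argue by induction on the pair $(g_2(K),f_0(K))$, ordered lexicographically. In the base case $g_2(K)=0$ the complex $K$ is a stacked $3$-sphere, hence (as recalled in the Introduction) a connected sum of boundaries of $4$-simplices, since a stellar subdivision of a facet of a $3$-sphere is precisely a connected sum with $\partial\Delta^4$; so there is nothing more to prove. Assume therefore $1\le g_2(K)\le 9$; then $K$ is a $3$-sphere by \cite{Walkup}, and $K\ne\partial\Delta^4$ because $g_2(K)\ge 1$.

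The first step is to eliminate missing tetrahedra. Suppose $\sigma=abcd$ is a missing tetrahedron of $K$. As $K$ has no singular vertices, every vertex link is $\mathbb{S}^2$, so by the Jordan curve theorem the circle $\partial(\sigma-x)$ separates $\lk(x,K)$ for each $x\in\sigma$; hence Lemma \ref{lemma:missingtetra1} applies and $K$ was formed by a handle addition or a connected sum. A handle addition would force $g_2(K)=g_2(\Delta)+10\ge 10$ by \eqref{g_2: handles} for the pre-handle complex $\Delta$, contradicting $g_2(K)\le 9$; so $K=K_1\# K_2$ with $\sigma$ a facet of each $K_i$. Each $K_i$ is again a triangulated $3$-sphere (cap the embedded PL $2$-sphere $\partial\sigma\subset|K|\cong\mathbb{S}^3$ by a $3$-ball on each side), hence a normal $3$-pseudomanifold with no singular vertices; by \eqref{g_2:connected sum} we have $g_2(K_1)+g_2(K_2)=g_2(K)\le 9$, while $f_0(K_i)<f_0(K)$ because every normal $3$-pseudomanifold has at least $5$ vertices. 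Applying the induction hypothesis to $K_1$ and $K_2$ and appending one connected sum recovers $K$. From now on $K$ has no missing tetrahedron.

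Now I would invoke Walkup's Proposition \ref{lemma:g_2<10}. Conditions $(i)$ and $(ii)$ of that proposition hold for $K$, but its conclusion $g_2(K)>10$ fails (as $g_2(K)\le 9$); hence condition $(iii)$ fails, i.e.\ there is a normal $3$-pseudomanifold $K'$ with $|K'|\cong|K|\cong\mathbb{S}^3$ and $g_2(K')<g_2(K)$ obtained from $K$ by a bistellar $2$-move, an edge contraction, or a two facets insertion. Reading each such operation backwards, $K$ is obtained from $K'$ by a bistellar $1$-move, by an edge expansion (necessarily with $n\ge 4$, since the forward edge contraction strictly lowered $g_2$), or by a two facets contraction, respectively; and a two facets contraction is, by the remark above, a bistellar $1$-move followed by an edge contraction (the intermediate complex may possess a missing tetrahedron, which is allowed). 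In every case $K$ is built from $K'$ using only the operations connected sum, bistellar $1$-move, edge contraction and edge expansion. Since $|K'|\cong\mathbb{S}^3$, the complex $K'$ is a triangulated $3$-sphere with no singular vertices, and $g_2(K')<g_2(K)\le 9$, so the induction hypothesis applies to $K'$; appending the operations just listed completes the induction.

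The substance of the argument is concentrated entirely in Walkup's Proposition \ref{lemma:g_2<10}, and what remains is bookkeeping. The points to watch are: that each reduction stays inside the class of triangulated $3$-spheres with no singular vertices and $g_2\le 9$; that the induction is well founded — the connected-sum step need not lower $g_2$, which is exactly why $f_0$ is carried as a secondary parameter (and why the bound ``at least five vertices'' is used); and that every reverse operation genuinely belongs to the prescribed list, the only nonobvious instance being the decomposition of a two facets contraction into a bistellar $1$-move and an edge contraction. Tracking the effect of each move on $g_2$ via \eqref{g_2: handles}, \eqref{g_2:connected sum}, and the formulas recorded earlier for the bistellar moves and for edge and two-facets contraction/expansion is then routine.
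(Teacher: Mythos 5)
Your proposal is correct and follows essentially the same route as the paper: decompose along missing tetrahedra via Lemma \ref{lemma:missingtetra1} (handle addition excluded since $g_2\leq 9<10$), then apply the contrapositive of Walkup's Proposition \ref{lemma:g_2<10} to find a $g_2$-reducing bistellar $2$-move, edge contraction, or two facets insertion, reverse these operations, and note that a two facets contraction is a bistellar $1$-move plus an edge contraction. The only difference is presentational: you organize the repetition as a lexicographic induction on $(g_2,f_0)$, whereas the paper first splits off all connected summands and then iterates the reduction on each summand.
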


\begin{proof}
If $g_2(K)=0$ then $K$ is a stacked sphere, i.e., $K$ is a connected sum of boundary of  4-simplices.  So, we assume that $1\leq g_{2}(K)\leq 9$, i.e., $K$ is not a boundary of a $4$-simplex. Let  $K=K_1\#\cdots\# K_n$ where each of $K_i$ has no missing tetrahedron and $g_2(K_i)\leq 9$ for all $1\leq i\leq n$. If $K_i$ is not a  boundary of a $4$-simplex, and there is no $K_i'$  with  $|K_i'|\cong |K_i|$, where $K_i'$ is obtained from $K_i$ by  bistellar $2$-move,  edge contraction or two facets insertion, then by Proposition \ref{lemma:g_2<10} we have $g_2(K_i)>10$. Thus, $g_2(K)>10$, which is a contradiction. Therefore, either $K_i$ is a boundary of a $4$-simplex or there is a $K_i'$  with  $|K_i'|\cong |K_i|$, where $K_i'$ is obtained from $K_i$ by  bistellar $2$-move,  edge contraction or two facets insertion. In the second case, $g_2(K_i')\leq g_2(K_i)-1$, and  $K_i'$ may have a missing tetrahedron. Thus we can repeat the same arguments for each $K_i'$ if $K_i$ is not  a boundary of a $4$-simplex until we get the reduced complex is a connected sum of boundary of a $4$-simplices. Thus, $K$ is obtained from boundary of $4$-simplices by the sequence of operations  connected sum, bistellar $1$-move, edge expansion and two facets contraction. Since `two facets contraction' is just a combination of a bistellar $1$-move and an edge contraction, we can say that $K$ is obtained from boundary of $4$-simplices by the sequence of operations  connected sum, bistellar $1$-move, edge contraction and edge expansion. 
\end{proof}

\begin{lemma}\label{lemma:g2<3}
	If $K$ is a normal $3$-pseudomanifold with $g_2(K)\leq 2$  then $K$ is a $3$-sphere and is obtained from some boundary of $4$-simplices by a sequence of operations connected sum and edge expansion.

\end{lemma}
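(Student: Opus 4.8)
The plan is to leverage the general characterization for $g_2 \le 9$ (the theorem just proved) and then simply rule out the operations that are not permitted when $g_2 \le 2$, namely \emph{bistellar $1$-move} and \emph{edge contraction}, so that only \emph{connected sum} and \emph{edge expansion} survive. The starting point is that by the preceding theorem, $K$ is a $3$-sphere obtained from some boundary complexes of $4$-simplices by a sequence of connected sums, bistellar $1$-moves, edge contractions and edge expansions. Equivalently, running the reduction backwards as in that proof, $K$ decomposes as $K = K_1 \# \cdots \# K_n$ where each $K_i$ has no missing tetrahedron and $g_2(K_i) \le g_2(K) \le 2$; by \eqref{g_2:connected sum} it suffices to show each $K_i$ is obtained from a boundary of a $4$-simplex by edge expansions alone.

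First I would dispose of the case $g_2(K_i) = 0$: then $K_i$ has no missing tetrahedron and is a stacked sphere, hence is the boundary of a single $4$-simplex (a connected sum of several $\partial\Delta^4$'s always has a missing tetrahedron). Next, for $g_2(K_i) \in \{1,2\}$ with no missing tetrahedron, I appeal to Proposition \ref{lemma:g_2<10}: since $g_2(K_i) \le 2 < 10$ and $K_i$ is not $\partial\Delta^4$, there must exist $K_i'$ with $|K_i'| \cong |K_i|$ obtained from $K_i$ by a bistellar $2$-move, an edge contraction, or a two facets insertion, and in each case $g_2(K_i') = g_2(K_i) - 1$. The key observation is that all three of these reductions are, up to relabeling, \emph{edge contractions in the broad sense}: a bistellar $2$-move is the inverse of a bistellar $1$-move, and "two facets insertion" is the inverse of "two facets contraction", which the text notes is a bistellar $1$-move followed by an edge contraction. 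So I must show that whenever $g_2 \le 2$, the only reduction that can drop $g_2$ by one without introducing a missing tetrahedron is an edge contraction of an edge $uv$ with $d(uv) = 4$ and $\lk(u) \cap \lk(v) = \lk(uv)$ — i.e. the inverse of an edge expansion — and that a bistellar $2$-move cannot occur.

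The heart of the argument is therefore: \emph{if $g_2(K_i) \le 2$ and $K_i$ has no missing tetrahedron, then $K_i$ admits no bistellar $2$-move and no proper two-facets-insertion, so the only available reduction is an edge contraction along a degree-$4$ edge.} For the bistellar $2$-move: it requires an edge $uv$ with $d(uv) = 3$ and $\lk(uv) = \partial(abc)$ with $abc \notin K_i$; I would show this forces either a missing tetrahedron in $K_i$ (namely $uvab$-type configurations, or $abc$ becomes a missing triangle whose boundary configuration pushes $g_2$ up) or directly that $g_2(K_i) \ge 3$ by a link/rigidity estimate via Corollary \ref{missingedges} applied to a suitable vertex — since reducing by a $2$-move produces $K_i'$ with $g_2(K_i') = g_2(K_i) - 1 \le 1$ and a missing tetrahedron, and a normal $3$-pseudomanifold with a missing tetrahedron and $g_2 \le 1$ must be a connected sum of $\partial\Delta^4$'s (by Lemma \ref{lemma:missingtetra1} plus \eqref{g_2:connected sum} since handle addition costs $10$), contradicting that $K_i'$ has $g_2 \le 1$ with a missing tetrahedron only if $g_2 = 0$, forcing $g_2(K_i) = 1$ and $K_i' = \partial\Delta^4 \# \partial\Delta^4$ — which one checks is not reachable from a $2$-move on a complex with no missing tetrahedron. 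A parallel (and slightly easier) analysis handles two facets insertion. What remains is then an edge contraction; I need $d(uv) = 4$ exactly (if $d(uv) = n \ge 5$ then $g_2$ drops by $n - 3 \ge 2$, landing at $g_2 \le 0$ with the contracted complex possibly having a missing tetrahedron, again handled by the $g_2 = 0$ classification), and $\lk(u) \cap \lk(v) = \lk(uv)$ is part of the definition of edge contraction. Contracting such an edge is precisely the inverse of an edge expansion (central retriangulation), so $K_i$ is obtained from $K_i'$ with $g_2(K_i') = g_2(K_i) - 1$ by an edge expansion, and $K_i'$ again has $g_2 \le 1$; iterating and using the $g_2 = 0$ base case, $K_i$ is built from $\partial\Delta^4$ by edge expansions, completing the proof.

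\textbf{The main obstacle} I anticipate is the bistellar $2$-move exclusion: one must carefully verify that a complex with no missing tetrahedron and $g_2 \le 2$ genuinely cannot support the configuration $d(uv) = 3$, $\lk(uv) = \partial(abc)$, $abc \notin K_i$ — the text already shows $abc \notin K_i$ is forced, so the real content is bounding $g_2$ from below in the presence of such a "thin" edge, presumably by exhibiting two missing edges in the link of $a$ (or of $u$) and invoking Corollary \ref{missingedges}, or by tracking what happens to $g_2$ after the $2$-move and quoting the $g_2 \le 1$ missing-tetrahedron dichotomy. If that low-$g_2$ bound turns out to need its own case analysis on whether $a$, $b$, $c$ are pairwise adjacent and on the structure of $\lk(u)$, that is where the bulk of the routine-but-nontrivial work will sit; everything downstream is bookkeeping with the $g_2$-change formulas \eqref{g_2:connected sum} and the edge-expansion formula, together with the base case $g_2 = 0 \Rightarrow K = \partial\Delta^4$.
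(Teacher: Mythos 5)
Your overall frame (split off connected summands along missing tetrahedra, then reduce each summand with no missing tetrahedron) matches the paper, but the step you yourself call the heart of the argument is false, and the paper does something different there. You claim that if $g_2(K_i)\leq 2$ and $K_i$ has no missing tetrahedron, then $K_i$ admits no bistellar $2$-move and no two-facets insertion, so Walkup's Proposition \ref{lemma:g_2<10} can only hand you an edge contraction. Counterexample: let $K_i$ be the join of the $3$-cycle $\partial(abc)$ with a $4$-cycle $C_4(x_1,x_2,x_3,x_4)$. This is a $3$-sphere with $f_0=7$, $f_1=19$, hence $g_2=1$; it has no missing tetrahedron (any $4$-set containing $\{a,b,c\}$ lacks the triangle $abc$, and any $4$-set with two vertices from each circle either spans an actual facet or lacks a triangle); yet every edge $x_ix_{i+1}$ has $\lk(x_ix_{i+1})=\partial(abc)$ with $abc\notin K_i$, so a bistellar $2$-move is available (and $\partial(abc)$ is a missing triangle in $\lk(x_i)$, so a two-facets insertion is available too). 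Performing that $2$-move simply produces a $7$-vertex stacked sphere, so no contradiction of the kind you hope for can arise; your fallback assertion that $\partial\Delta^4\#\partial\Delta^4$-type outputs are ``not reachable from a $2$-move'' fails here. Since the inverse of a bistellar $2$-move is a bistellar $1$-move, which is not among the allowed operations in the lemma's conclusion, you cannot conclude anything from the mere existence of \emph{some} Walkup reduction: you must produce a contractible edge, and your proposal contains no mechanism for doing so.

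That mechanism is exactly what the paper's proof supplies, and it bypasses Proposition \ref{lemma:g_2<10} entirely. After the connected-sum splitting, the paper first shows every vertex of $K_i$ has degree at least $5$ (a degree-$4$ vertex forces either $K_i=\partial\Delta^4$ or a missing tetrahedron). Then, for a vertex $u$: if for some neighbour $x$ the set $\lk(u)\cap\lk(x)-\lk(ux)$ contained an open edge, there would be a missing tetrahedron; if it contained a vertex for \emph{every} neighbour $x$, then since $d(u)\geq 5$ there are at least three edges of $K_i$ with endpoints in $\lk(u)$ but not in $\lk(u)$, and Corollary \ref{missingedges} gives $g_2(K_i)\geq 3$, a contradiction. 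Hence some edge $uv$ satisfies $\lk(u)\cap\lk(v)=\lk(uv)$ and can be contracted (its inverse being an edge expansion); if $d(uv)\geq 4$ then $g_2$ drops, and if $d(uv)=3$ then $g_2$ is unchanged but the vertex count drops, so the iteration terminates. Note also that your insistence on $d(uv)=4$ exactly is neither justified nor needed: contractions along edges of degree $3$ or of degree $\geq 5$ are still inverses of edge expansions, and the degree-$3$ case genuinely occurs and is handled in the paper by the decreasing vertex count rather than by $g_2$.
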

\begin{proof}
If $g_2(K)=0$ then $K$ is a stacked sphere, i.e., $K$ is a connected sum of boundary of  4-simplices. 
Let $1\leq g_2(K)\leq 2$. We know that $K$ is a 3-sphere. If $K$ has a missing tetrahedron $\sigma$ then $K=K_1\#K_2$ where $\sigma\in K_1$ as well as $\sigma\in K_2$. So, there is a finite collection of normal $3$-pseudomanifolds $K_1, \dots ,K_n$ such that $K=K_1\# \cdots \# K_n$ where each of $K_i$ has no missing tetrahedron and $1\leq g_2(K_i)\leq 2$ for all $1\leq i\leq n$. Now, we can proceed with  each $K_i$ separately if $K_i$ is not a  boundary of  a 4-simplex.
 
Let $v\in K_i$ be a vertex such that  $d(v)=4$. Then link of $v$ is the boundary of a $3$-simplex, say $abcd$. If $abcd$ is a face of $K_i$, then the boundary
of the four-simplex $vabcd$ is in $K_i$ and hence $K_i$ is the boundary of the $4$-simplex, which is not possible. Thus, $abcd$ is a missing tetrahedron in $K_i$, which is again a contradiction. Thus, $d(v)\geq 5$ for every vertex $v\in K_i$. 

Let $u$ be a vertex in $K_i$ then $d(u)\geq 5$. If $\lk (u)\cap \lk (x) - \lk (ux)$ contains an open edge $(a,b)$ for a vertex $x\in \lk (u)$ then $\partial(uxab)$ is a missing tetrahedran, which is a contradiction. If $\lk (u)\cap \lk (x) - \lk (ux)$ contains some vertex, say $y$ for every vertex $x \in \lk (u)$ then $xy\not\in \lk (u)$. Since $d(u)\geq 5$,  there are at least three such edges in $K - \Star (u)$, whose end points are in $\lk (u)$. Then by Corollary \ref{missingedges}, we have $g_2(K_i) \geq 3$. This is a contradiction. Thus, there must be a vertex $v\in \lk (u)$ such that $\lk (u) \cap \lk (v) - \lk (uv)$ is empty. Then we contract the edge $uv$ and we get a new normal $3$-pseudomanifold $\tilde K_i$ such that $|\tilde K_i|\cong |K_i|$. If $d(uv)\geq 4$ then $g_2(\tilde K_i) \leq g_2(K_i)-1$. Here $\tilde K_i$ may have a missing tetrahedron. If $g_2(\tilde K_i) =0$ then we are done, and we can repeat the same arguments from the beginning if $g_2(\tilde K_i) = 1$. If $d(uv)=3$ then  $g_2(\tilde K_i)=g_2(K_i)$ but the number of vertices is reduced. Since $K$ had finite number of vertices, proceeding in this way after finite number of steps we must have a normal $3$-pseudomanifold $\bar{K}_i$ such that for some edge $pq\in\bar{K_i}$, $\lk (p) \cap \lk (q) - \lk (pq)$ is empty and $d(pq)\geq 4$. Then we are done. Thus,  $K$ is obtained from boundary of $4$-simplices by the sequence of operations connected sum and edge expansions.
\end{proof}

\section{Normal $3$-pseudomanifolds with singular vertices}

Let $K$ be a normal $3$-pseudomanifold  with singular vertices and $g_2(K)\leq 4$. By Lemma \ref{lemma:lower bound} the only possible link of the singular vertices in $K$ is $\mathbb{RP}^2$ and  $3\leq g_2(K)\leq 4$. From now onwards for a singular vertex in $K$, we mean there is a vertex $v\in K$ such that $|\lk (v)|\cong \mathbb{RP}^2$. We know that the sum of the Euler characteristic of link of the vertices for a $3$-dimensional normal pseudomanifold is even. Thus, the total number of singular vertices  in $K$ is even. The sum $g_2 + g_3$ in a $3$-dimensional normal pseudomanifold is $$\sum_{v\in V(K)}(2-\chi(\lk (v))).$$
From \cite{BasakSwartz}, we know that $g_2(K)=3$ implies $K$ has exactly two singular vertices. From [\cite{Swartzcounting} Corollary $1.8$], we have $g_2(K)=4$ implies that $g_3(K)\leq 5$. Since the Euler characteristic of a singular vertex is one in our case, the number of singular vertices in $K$ is less than $10$. The possible number of singular vertices in $K$ is $0,2,4,6$ and $8$. But, if $K$ has $6$ singular vertices then Theorem $2.12$ of \cite{Swartzcounting} implies $g_2(K)\geq 5$, which is a contradiction. Thus the case of $6$ singular vertices in $K$ is not possible.

We have defined edge contraction in Definition \ref{Edge contraction}. Let $uv$ be an edge of $K$ such that $\lk (u,K)\cap \lk (v,K)= \lk (uv,K)$ and $v$ is non-singular. If $K_1$ is the normal pseudomanifold obtained from $K$ by contracting the edge $uv$ then from Lemma \ref{homeomorphic}, we know that  $|K|\cong |K_1|$. We can define the reverse operation of this as well. 

Let $K'$ be a normal $3$-pseudomanifold with $g_2(K)\leq 4$. Let $w\in K'$ and $C_n(u_1,u_2, \dots ,u_n)$ be an $n$-cycle in $ \lk (w)$ such that $C_n(u_1,u_2, \dots ,u_n)$ separates $ \lk (w)$. Then one portion will be a disc and other portion will be either a disc or a M\"{o}bius strip. Then we retriangulate $\Star w$ by removing $w$ and inserting $n$ tetrahedra $uvu_{1}u_2,uvu_{2}u_3, \dots ,uvu_{n}u_1$ and then coning off the two portions formed by the $\lk (w)$ and the circle $C_n(u_1,u_2, \dots ,u_n)$ by $u$ and $v$ respectively. Let $K$ be the  resulting complex then $g_2(K)=g_2(K')+n-3$. Since $K'$ is formed from $K$ by contracting the edge $uv$, where one vertex (which was coned over the disc) is non-singular, we have  Lemma \ref{homeomorphic},  $|K'|\cong |K|$.

\begin{lemma}\label{lemma:three mobius}
	Let $\Delta$ be a 3-dimensional normal pseudomanifold. Let $\tau=abcd$ be a missing tetrahedron such that for $x\in\{a,b, c\}$,  a small neighborhood of  $|\partial(\tau \setminus\{x\}])|$ in $|\lk (x,\Delta)|$ is a M\"{o}bius strip. Then a small neighborhood of  $|\partial(abc)|$ in $|\lk (d,\Delta)|$ is also  a M\"{o}bius strip.
\end{lemma}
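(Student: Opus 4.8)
The plan is to analyze the homological/parity data carried by the boundary of the missing tetrahedron $\tau=abcd$ inside each of the four vertex links, and to show that a consistency condition forces the conclusion. Fix $\tau=abcd$ missing, so that $\partial(\tau)\in\Delta$; in particular each of the four triangular $2$-faces $\tau\setminus\{x\}$ lies in $\Delta$, and for each vertex $x\in\tau$ the boundary circle $\partial(\tau\setminus\{x\})$ is a cycle in $\lk(x,\Delta)$. A small regular neighborhood $N_x$ of $|\partial(\tau\setminus\{x\})|$ in the closed surface $|\lk(x,\Delta)|$ is an annulus or a Möbius strip, and it is a Möbius strip precisely when the class of that circle is $\mathbb{Z}/2$-nontrivial in the sense that a regular neighborhood is one-sided; equivalently, the circle does \emph{not} separate a neighborhood into two sides. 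The hypothesis says $N_a, N_b, N_c$ are all Möbius; we must deduce $N_d$ is Möbius.

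First I would set up the right invariant. To each vertex $x\in\tau$ assign $\varepsilon(x)\in\mathbb Z/2$, equal to $0$ if $N_x$ is an annulus and $1$ if $N_x$ is a Möbius strip. The key step is to produce a single closed surface (or $\mathbb Z/2$-cycle) on which all four circles $\partial(\tau\setminus\{x\})$ sit simultaneously, so that their ``one-sidedness parities'' are linked by one global relation. The natural candidate is the subcomplex $S=\Star(a,\Delta)\cup\Star(b,\Delta)\cup\Star(c,\Delta)\cup\Star(d,\Delta)$ together with the missing face $\tau$ filled in — or, more cleanly, the $2$-complex obtained from $\partial\tau\cong\mathbb S^2$ (the four triangles $\tau\setminus\{x\}$) glued to the four links along the circles. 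Concretely: the two triangles of $\partial\tau$ not containing $x$ together with a disk portion of $|\lk(x,\Delta)|$ on one side of $\partial(\tau\setminus\{x\})$... — but since $N_x$ may be Möbius, there need not be a disk. Instead I would work with $\mathbb Z/2$-homology of the pseudomanifold $\Delta$ restricted near $\tau$. The point is that $\partial(\tau\setminus\{a\})+\partial(\tau\setminus\{b\})+\partial(\tau\setminus\{c\})+\partial(\tau\setminus\{d\})=0$ as $1$-chains mod $2$ on the vertex set $\{a,b,c,d\}$, since each edge $xy\subseteq\tau$ lies in exactly two of the four triangles. This is the algebraic backbone: the four boundary circles sum to zero.

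Next I would translate $\varepsilon(x)$ into intersection-number language. For a one-sided simple closed curve $\gamma$ in a surface $F$, the self-intersection $\gamma\cdot\gamma=1$ in $\mathbb Z/2$; for a two-sided curve it is $0$. Moreover the four circles $\gamma_x:=\partial(\tau\setminus\{x\})$ all lie in the ``union of links'' glued along $\partial\tau$, and pairwise they meet only along edges of $\tau$: $\gamma_x$ and $\gamma_y$ share exactly the edge opposite to $\{x,y\}$ in $\tau$, i.e.\ they intersect in one edge, contributing a controlled mod-$2$ intersection. Using the relation $\gamma_a+\gamma_b+\gamma_c+\gamma_d=0$ and bilinearity of the mod-$2$ intersection form on the relevant surface, $0=(\sum_x\gamma_x)\cdot(\sum_x\gamma_x)=\sum_x \gamma_x\cdot\gamma_x + 2\sum_{x<y}\gamma_x\cdot\gamma_y=\sum_x\varepsilon(x)$ in $\mathbb Z/2$ — using that each pairwise term appears an even number of times or is itself even by the edge-count. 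Hence $\varepsilon(a)+\varepsilon(b)+\varepsilon(c)+\varepsilon(d)=0$, and since $\varepsilon(a)=\varepsilon(b)=\varepsilon(c)=1$ we get $\varepsilon(d)=1$, i.e.\ $N_d$ is a Möbius strip. The clean way to make ``the relevant surface'' precise is to observe (as the paper does in the remarks preceding Lemma~\ref{lemma:missingtetra2}) that each $\gamma_x$ is a cycle in a \emph{surface} $|\lk(x,\Delta)|$, and to glue these four surfaces along $\partial|\tau|\cong\mathbb S^2$: the result is a closed (possibly non-orientable) surface-like $2$-cycle $Z$ carrying all four $\gamma_x$, on which the mod-$2$ intersection pairing is defined and the self-intersection of $\gamma_x$ equals $\varepsilon(x)$.

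The main obstacle I anticipate is making the gluing $Z$ genuinely a closed surface (or at least a mod-$2$ pseudomanifold on which self-intersection is well defined and agrees with the local Möbius/annulus dichotomy), because the four links may overlap in complicated ways away from $\tau$, and a naive union is not a manifold. Two routes around this: (a) replace $Z$ by the abstract disjoint union of the four surfaces $|\lk(x,\Delta)|$ cut along $\gamma_x$, reglued along $\partial|\tau|$; track orientations/coorientations of normal bundles of the $\gamma_x$ combinatorially via which ``side'' the two incident tetrahedra of each triangle $\tau\setminus\{x\}$ lie; this reduces the whole statement to a finite $\mathbb Z/2$ bookkeeping of the $16$ tetrahedra of $\Delta$ containing an edge of $\tau$ — a purely local count, since $\partial\tau\subset\Delta$ guarantees every such edge and triangle is present with the standard local picture. (b) Alternatively, avoid surfaces entirely: encode $\varepsilon(x)$ as whether the link of the edge $\tau\setminus\{x,y\}$ (a single vertex-pair, giving a circle $\lk(\text{edge})$) ``wraps oddly,'' and chase Mayer–Vietoris for $H_1(\cdot;\mathbb Z/2)$ of $\partial\tau$ together with the four punctured links; the coboundary map then yields exactly the relation $\sum_x\varepsilon(x)=0$. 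I would develop route (a), as it parallels the proof of Lemma~\ref{lemma:missingtetra2} most closely and keeps everything combinatorial; the only real content is verifying that the edge-by-edge normal-orientation data around $\partial\tau$ assembles consistently, which it does because $|\partial\tau|\cong\mathbb S^2$ is simply connected and orientable.
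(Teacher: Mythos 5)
Your underlying parity claim (the number of vertices $x\in\tau$ for which the neighborhood of $|\partial(\tau-x)|$ in $|\lk(x,\Delta)|$ is a M\"obius strip is even) is a correct and attractive way to unify this lemma with Lemmas \ref{lemma:missingtetra1} and \ref{lemma:missingtetra2}, but as written your argument has a genuine gap exactly at its centerpiece. The mod-$2$ intersection-form computation is not grounded: the four circles $\gamma_x=\partial(\tau-x)$ live in four \emph{different} closed surfaces $|\lk(x,\Delta)|$, and the object you glue from these links along $\partial\tau$ is not a surface (the links overlap away from $\tau$ in uncontrolled ways), so there is no ambient space in which a symmetric intersection pairing is defined, in which the self-intersection of $\gamma_x$ equals your $\varepsilon(x)$, and in which the chain identity $\gamma_a+\gamma_b+\gamma_c+\gamma_d=0$ can be fed into bilinearity. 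Even inside a single surface the pairwise terms $\gamma_x\cdot\gamma_y$ would be problematic, since these curves share an entire edge rather than meeting transversally. You acknowledge this as ``the main obstacle'' and defer to route (a), but route (a) is precisely where the mathematical content lies, and the justification you offer for it --- that the bookkeeping closes up ``because $|\partial\tau|\cong\mathbb{S}^2$ is simply connected and orientable'' --- is not the right reason and proves nothing by itself: simple connectivity of $\partial\tau$ does not control how the two tetrahedra over each triangle of $\partial\tau$ are matched up as one walks around each circle $\gamma_x$.

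What actually makes the parity argument work is a local symmetry that never appears in your sketch: writing $\lk(\tau-u,\Delta)=\{p_u,n_u\}$ for each $u\in\tau$, the side-matching of the pairs $\{p_u,n_u\}$ and $\{p_w,n_w\}$ that one records at the vertex $z$ of the circle $\gamma_x$ takes place inside the circle $\lk(xz,\Delta)$, and it is literally the same matching whether one views that circle as $\lk(z,\lk(x,\Delta))$ or as $\lk(x,\lk(z,\Delta))$; hence each edge of $\tau$ contributes its flip-or-not twice to $\varepsilon(a)+\varepsilon(b)+\varepsilon(c)+\varepsilon(d)$, giving the desired even sum. Carrying this out is exactly the finite bookkeeping with the labels $p_1,n_1,\dots,p_4,n_4$ and the circles $\lk(ac),\lk(ab),\lk(ad),\dots$ that the paper's proof performs explicitly (it traces the circles in $\lk(a)$, $\lk(c)$, $\lk(b)$ and reads off the neighborhood of $\partial(abc)$ in $\lk(d)$). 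So your proposal identifies a viable and arguably more conceptual route, but in its current form it asserts rather than proves the one cancellation the lemma needs; to complete it you must either construct an honest surface/cycle supporting all four curves or spell out the edge-by-edge matching argument above.
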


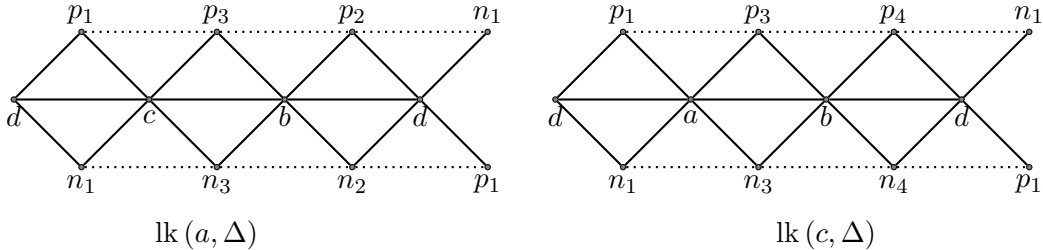
\begin{figure}[ht]
	\tikzstyle{ver}=[]
	\tikzstyle{vertex}=[circle, draw, fill=black!50, inner sep=0pt, minimum width=2pt]
	\tikzstyle{edge} = [draw,thick,-]
	\centering
	\begin{tikzpicture}[scale=0.45]
		
		\begin{scope}[shift={(-8,0)}]
			\foreach \x/\y/\z in {-6/1/b,-2/1/c,2/1/a,6/1/b1}{
				\node[vertex] (\z) at (\x,\y){};
			}
			
			\foreach \x/\y/\z in {-6/0.5/d,-2/0.5/c,2/0.5/b,6/0.5/d}{
				\node[ver] () at (\x,\y){$\z$};
			}
			
			\foreach \x/\y/\z in {-4/3/p_2,0/3/p_4,4/3/p_1,8/3/p2}{
				\node[vertex] (\z) at (\x,\y){};
			}
			
			\foreach \x/\y/\z in {-4/3.5/p_1,0/3.5/p_3,4/3.5/p_2,8/3.5/n_1}{
				\node[ver] () at (\x,\y){$\z$};
			}
			
			\foreach \x/\y/\z in {-4/-1/n_2,0/-1/n_4,4/-1/n_1,8/-1/q2}{
				\node[vertex] (\z) at (\x,\y){};
			}
			
			\foreach \x/\y/\z in {-4/-1.5/n_1,0/-1.5/n_3,4/-1.5/n_2,8/-1.5/p_1}{
				\node[ver] () at (\x,\y){$\z$};
			}

			\foreach \x/\y in {b/c,c/a,a/b1,b/p_2,c/p_2,b/n_2,c/n_2,c/p_4,a/p_4,c/n_4,a/n_4,a/p_1,b1/p_1,a/n_1,b1/n_1,
				b1/p2,b1/q2}{
				\path[edge] (\x) -- (\y);}
			
			\foreach \x/\y in {p_2/p_4,p_1/p2,n_2/n_4,n_1/q2}{
				\path[edge, dotted] (\x) -- (\y);}
			\path[edge, dotted] (p_4) -- (p_1);
			\path[edge, dotted] (n_4) -- (n_1);
			
		\end{scope}
		
		\begin{scope}[shift={(8,0)}]
			\foreach \x/\y/\z in {-6/1/b,-2/1/c,2/1/d,6/1/b1}{
				\node[vertex] (\z) at (\x,\y){};
			}
			
			\foreach \x/\y/\z in {-6/0.5/d,-2/0.5/a,2/0.5/b,6/0.5/d}{
				\node[ver] () at (\x,\y){$\z$};
			}
			
			\foreach \x/\y/\z in {-4/3/p_2,0/3/p_4,4/3/p_1,8/3/p2}{
				\node[vertex] (\z) at (\x,\y){};
			}
			
			\foreach \x/\y/\z in {-4/3.5/p_1,0/3.5/p_3,4/3.5/p_4,8/3.5/n_1}{
				\node[ver] () at (\x,\y){$\z$};
			}
			
			\foreach \x/\y/\z in {-4/-1/n_2,0/-1/n_4,4/-1/n_1,8/-1/q2}{
				\node[vertex] (\z) at (\x,\y){};
			}
			
			\foreach \x/\y/\z in {-4/-1.5/n_1,0/-1.5/n_3,4/-1.5/n_4,8/-1.5/p_1}{
				\node[ver] () at (\x,\y){$\z$};
			}

			\foreach \x/\y in {b/c,c/d,d/b1,b/p_2,c/p_2,b/n_2,c/n_2,c/p_4,d/p_4,c/n_4,d/n_4,d/p_1,b1/p_1,d/n_1,b1/n_1,
				b1/p2,b1/q2}{
				\path[edge] (\x) -- (\y);}
			
			\foreach \x/\y in {p_2/p_4,p_1/p2,n_2/n_4,n_1/q2}{
				\path[edge, dotted] (\x) -- (\y);}
			\path[edge, dotted] (p_4) -- (p_1);
			\path[edge, dotted] (n_4) -- (n_1);
			
		\end{scope}
		
		\node[ver] () at (-8.3,-3){$\lk (a,\Delta)$};
		\node[ver] () at (10,-3){$\lk (c,\Delta)$};
		
	\end{tikzpicture}
	\caption{$\lk (a,\Delta)$ and $\lk (c,\Delta)$ in $\Delta$.}\label{fig:lk(a&c)}
\end{figure}

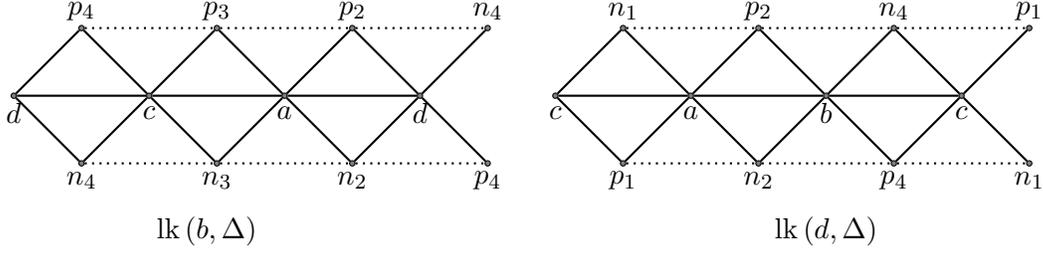
\begin{figure}[ht]
	\tikzstyle{ver}=[]
	\tikzstyle{vertex}=[circle, draw, fill=black!50, inner sep=0pt, minimum width=2pt]
	\tikzstyle{edge} = [draw,thick,-]
	\centering
	\begin{tikzpicture}[scale=0.45]
		
		\begin{scope}[shift={(-8,0)}]
			\foreach \x/\y/\z in {-6/1/b,-2/1/c,2/1/a,6/1/b1}{
				\node[vertex] (\z) at (\x,\y){};
			}
			
			\foreach \x/\y/\z in {-6/0.5/d,-2/0.5/c,2/0.5/a,6/0.5/d}{
				\node[ver] () at (\x,\y){$\z$};
			}
			
			\foreach \x/\y/\z in {-4/3/p_2,0/3/p_4,4/3/p_1,8/3/p2}{
				\node[vertex] (\z) at (\x,\y){};
			}
			
			\foreach \x/\y/\z in {-4/3.5/p_4,0/3.5/p_3,4/3.5/p_2,8/3.5/n_4}{
				\node[ver] () at (\x,\y){$\z$};
			}
			
			\foreach \x/\y/\z in {-4/-1/n_2,0/-1/n_4,4/-1/n_1,8/-1/q2}{
				\node[vertex] (\z) at (\x,\y){};
			}
			
			\foreach \x/\y/\z in {-4/-1.5/n_4,0/-1.5/n_3,4/-1.5/n_2,8/-1.5/p_4}{
				\node[ver] () at (\x,\y){$\z$};
			}

			\foreach \x/\y in {b/c,c/a,a/b1,b/p_2,c/p_2,b/n_2,c/n_2,c/p_4,a/p_4,c/n_4,a/n_4,a/p_1,b1/p_1,a/n_1,b1/n_1,
				b1/p2,b1/q2}{
				\path[edge] (\x) -- (\y);}
			
			\foreach \x/\y in {p_2/p_4,p_1/p2,n_2/n_4,n_1/q2}{
				\path[edge, dotted] (\x) -- (\y);}
			\path[edge, dotted] (p_4) -- (p_1);
			\path[edge, dotted] (n_4) -- (n_1);
			
		\end{scope}
		
		\begin{scope}[shift={(8,0)}]
			\foreach \x/\y/\z in {-6/1/b,-2/1/c,2/1/d,6/1/b1}{
				\node[vertex] (\z) at (\x,\y){};
			}
			
			\foreach \x/\y/\z in {-6/0.5/c,-2/0.5/a,2/0.5/b,6/0.5/c}{
				\node[ver] () at (\x,\y){$\z$};
			}
			
			\foreach \x/\y/\z in {-4/3/p_2,0/3/p_4,4/3/p_1,8/3/p2}{
				\node[vertex] (\z) at (\x,\y){};
			}
			
			\foreach \x/\y/\z in {-4/3.5/n_1,0/3.5/p_2,4/3.5/n_4,8/3.5/p_1}{
				\node[ver] () at (\x,\y){$\z$};
			}
			
			\foreach \x/\y/\z in {-4/-1/n_2,0/-1/n_4,4/-1/n_1,8/-1/q2}{
				\node[vertex] (\z) at (\x,\y){};
			}
			
			\foreach \x/\y/\z in {-4/-1.5/p_1,0/-1.5/n_2,4/-1.5/p_4,8/-1.5/n_1}{
				\node[ver] () at (\x,\y){$\z$};
			}

			\foreach \x/\y in {b/c,c/d,d/b1,b/p_2,c/p_2,b/n_2,c/n_2,c/p_4,d/p_4,c/n_4,d/n_4,d/p_1,b1/p_1,d/n_1,b1/n_1,
				b1/p2,b1/q2}{
				\path[edge] (\x) -- (\y);}
			
			\foreach \x/\y in {p_2/p_4,p_1/p2,n_2/n_4,n_1/q2}{
				\path[edge, dotted] (\x) -- (\y);}
			\path[edge, dotted] (p_4) -- (p_1);
			\path[edge, dotted] (n_4) -- (n_1);
			
		\end{scope}
		
		\node[ver] () at (-8.3,-3){$\lk (b,\Delta)$};
		\node[ver] () at (10,-3){$\lk (d,\Delta)$};
		
	\end{tikzpicture}
	\caption{$\lk (b,\Delta)$ and $\lk (d,\Delta)$ in $\Delta$.}\label{fig:lk(b&d)}
\end{figure}

\begin{proof}
	Let us consider $\lk (acd,\Delta)=\{p_1,n_1\}$, $\lk (abd,\Delta)=\{p_2,n_2\}$, $\lk (abc,\Delta)=\{p_3,n_3\}$ and $\lk (bcd,\Delta)$ $=\{p_4,n_4\}$ in $\Delta$. Given that, a small neighborhood of  $|\partial(bcd)|$ in $|\lk (a,\Delta)|$ is a M\"{o}bius strip. Since $\lk (c,\lk (a,\Delta))$ is a circle,  we have two possibilities   $C(d,p_1,\dots, p_3,b$, $n_3,\dots,n_1,d)$ or $C(d,p_1,\dots, n_3,b,p_3,\dots,n_1,d)$ for the circle. Without loss of generality we can assume that $C(d,p_1,\dots, p_3,b,n_3,\dots,n_1,d)$ is the circle. Since $\lk (b,\lk (a,\Delta))$ is a circle, without loss of generality we can assume that $C(c,p_3,\dots, p_2,d,n_2,\dots,n_3,c)$ is the circle. Again, $\lk (d,\lk (a,\Delta))$ is a circle. It is also known that a small neighborhood of  $|\partial(bcd)|$ in $|\lk (a,\Delta)|$ is a M\"{o}bius strip. Therefore, the circle $\lk (d,\lk (a,\Delta))$ should be of the form  $C(b,p_2,\dots, n_1,c,p_1,\dots,n_2,b)$.
	
	Next, we look at  $|\lk (c,\Delta)|$. We have already obtained the circle 
	$\lk (a,\lk (c,\Delta))$ = $\lk (c,\lk (a,\Delta))$ as $C(d,p_1,\dots, p_3,b,n_3,\dots,n_1,d)$. Without loss of generality we can assume that $C(a,p_3,\dots, p_4,d,n_4,\dots,n_3,a)$ is the circle $\lk (b,\lk (c,\Delta))$. Since a small neighborhood of  $|\partial(abd)|$ in $|\lk (c,\Delta)|$ is a M\"{o}bius strip, the circle $\lk (d,\lk (c,\Delta))$ should be of the form  $C(b,p_4,\dots, n_1,a,p_1,\dots,n_4,b)$.
	
	Now, we look at  $|\lk (b,\Delta)|$. We have already obtained the circle 
	$\lk (c,\lk (b,\Delta))$ = $\lk (b,\lk (c,\Delta))$ and  $\lk (a,\lk (b,\Delta))$ = $\lk (b,\lk (a,\Delta))$. Since a small neighborhood of  $|\partial(acd)|$ in $|\lk (b,\Delta)|$ is a M\"{o}bius strip, the circle $\lk (d,\lk (b,\Delta))$ should be of the form  $C(a,p_2,\dots, n_4$, $c,p_4,\dots,n_2,a)$. These give the circles  $\lk (a,\lk (d,\Delta))$, $\lk (b,\lk (d,\Delta))$ and $\lk (c,\lk (d,\Delta))$, and hence a portion of $\lk (d,\Delta)$ has been obtained in figure \ref{fig:lk(b&d)}. This gives  a small neighborhood of  $|\partial(abc)|$ in $|\lk (d,\Delta)|$ is a M\"{o}bius strip. This completes the proof.
\end{proof}

\begin{lemma}\label{lemma:tetrahedron}
Let $K$ be a normal $3$-pseudomanifold with $g_2(K)=4$ and $K$ has more than two singular vertices. Let $\partial(vxyz)$ be a missing tetrahedron in $K$ such that $\lk (v)$ is separated by the missing triangle $\partial(xyz)$. Then $K=K_1\# K_2$, where $g_2(K_2)=0$ and $vxyz \in K_1, K_2$. In other words, $K$ is obtained from a normal $3$-pseudomanifold $K_1$ by facet subdivisions.	
\end{lemma}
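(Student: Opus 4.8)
The plan is to first show that the missing triangle opposite to each of the four vertices $v,x,y,z$ separates that vertex's link — which lets Lemma~\ref{lemma:missingtetra1} apply and exhibits $K$ as a connected sum at $vxyz$ — and then to pin down the $g_2$-split by a count of singular vertices.

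For $w\in\{x,y,z\}$ the triangle $\partial(vxyz-w)$ is a missing triangle of $\lk(w,K)$, and as a simple closed curve in that surface it either separates $\lk(w,K)$ or has a M\"obius-strip neighborhood; in the latter case $|\lk(w,K)|$ is non-orientable, and since $g_2(\lk(w,K))\le g_2(K)=4$ by Lemma~\ref{lemma:lower bound} this forces $|\lk(w,K)|\cong\mathbb{RP}^2$, so $w$ is singular. Call such a $w$ \emph{M\"obius}, and otherwise \emph{separating}; by hypothesis $v$ is separating. First I would invoke the contrapositive of Lemma~\ref{lemma:three mobius}, applied to $vxyz$ with $v$ as the distinguished vertex, to see that $x,y,z$ cannot all be M\"obius. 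Next I would rule out the case that exactly one of $x,y,z$, say $x$, is M\"obius: Lemma~\ref{lemma:missingtetra2}, applied to $\tau=vxyz$ with $x$ in the role of ``$u$'' and two of the separating vertices of $\{v,y,z\}$ in the role of ``$a,b$'', has its hypotheses satisfied and concludes that the third vertex of $\{v,y,z\}$ is M\"obius, a contradiction. Then I would rule out the case that exactly two of $x,y,z$, say $x$ and $y$, are M\"obius: Lemma~\ref{lemma:missingtetra2}, applied with $\{a,b\}=\{z,v\}$ and folded edge $xy$, gives $K=(\Delta')^\psi_{xy}$ with $vxyz$ the removed facet, so $g_2(\Delta')=g_2(K)-\binom{3}{2}=1$ by \eqref{edge folding g2}; by Lemma~\ref{lemma:lower bound}, $\Delta'$ has no singular vertex (a singular link would force $g_2(\Delta')\ge 3$), hence every vertex link of $\Delta'$ is a $2$-sphere. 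An edge folding alters only the links of the two endpoints $x,y$ of the folded edge and of the two identified vertices $z,v$; by the admissibility condition the links of $z$ and of $v$ in $K$ are connected sums of two $2$-spheres coming from $\Delta'$, hence again $2$-spheres, so $K$ has at most two singular vertices (namely $x$ and $y$) — contradicting that $K$ has more than two. Therefore none of $x,y,z$ is M\"obius, i.e.\ the missing triangle opposite each of $v,x,y,z$ separates that vertex's link.

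Now Lemma~\ref{lemma:missingtetra1} applies: $K$ is obtained by handle addition or connected sum at $vxyz$. Handle addition is impossible, since by \eqref{g_2: handles} it would give $g_2(K)\ge\binom{5}{2}=10$; hence $K=K_1\#K_2$ with $vxyz$ a facet of each $K_i$ and, by \eqref{g_2:connected sum}, $g_2(K_1)+g_2(K_2)=4$. Finally I would count singular vertices. A vertex $w\notin\{v,x,y,z\}$ has the same link in $K$ as in the $K_i$ containing it, while for $w\in\{v,x,y,z\}$ we have $\lk(w,K)=\lk(w,K_1)\#\lk(w,K_2)$; since $\mathbb{RP}^2$ is prime and (exactly as for $K$) the only non-spherical surface that can occur as a vertex link of a $K_i$ with $g_2(K_i)\le 4$, such a $w$ is singular in $K$ precisely when it is singular in exactly one of $K_1,K_2$. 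Thus the number of singular vertices of $K$ equals the sum of those of $K_1$ and $K_2$. Now $g_2(K_i)\le 2$ forces $K_i$ to have no singular vertex (Lemma~\ref{lemma:lower bound}), and $g_2(K_i)=3$ forces $K_i$ to have exactly two singular vertices (from \cite{BasakSwartz}); going through the splits of $g_2(K_1)+g_2(K_2)=4$, the only one compatible with $K$ having more than two singular vertices is $\{g_2(K_1),g_2(K_2)\}=\{4,0\}$. After relabelling, $g_2(K_2)=0$, so $K_2$ is a stacked sphere and $K$ is obtained from $K_1$ by facet subdivisions.

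I expect the two-M\"obius case to be the main obstacle: it requires a careful description of how vertex links transform under an edge folding — in particular, the fact that the two identified vertices end up with $2$-sphere links — and this is exactly what pushes the singular-vertex count below three and delivers the contradiction. The remaining steps are essentially bookkeeping with the $g_2$-additivity identities \eqref{g_2: handles}, \eqref{g_2:connected sum}, \eqref{edge folding g2} and the known classification of the case $g_2=3$.
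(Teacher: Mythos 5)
Your proposal is correct and follows essentially the same route as the paper: use Lemmas \ref{lemma:three mobius}, \ref{lemma:missingtetra2} and \ref{lemma:missingtetra1} to see that $K$ arises by handle addition, connected sum, or an edge folding at the missing tetrahedron, rule out handle addition via \eqref{g_2: handles} and edge folding via the resulting count of exactly two singular vertices (since the unfolded complex has $g_2\leq 1$ and hence no singular vertices), and then do the $g_2$/singular-vertex bookkeeping for the connected sum to force the split $\{4,0\}$. Your case analysis (zero, one, two, or three M\"obius vertices among $x,y,z$) is just a more explicit write-up of what the paper treats tersely, so there is no substantive difference.
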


\begin{proof}
Since the link of a vertex $u\in K$ is either $\mathbb{S}^2$ or $\mathbb{RP}^2$, any missing triangle $\partial(\sigma)$ in $\lk (u)$ either separates $\lk (u)$ or a small neighborhood of $\partial(\sigma)$ in $\\lk (x)$ is a M\"{o}bius strip.
If $\partial(yzv)$ does not separate the $\lk (x)$ then a small neighborhood of $\partial(vyz)$ in $\lk (x)$ is a M\"{o}bius strip. Then from Lemmas \ref{lemma:three mobius} and \ref{lemma:missingtetra2}, we have link of exactly two vertices from $\{v,y,z\}$ are separated by the missing triangle formed by other three vertices. Thus Lemmas \ref{lemma:missingtetra1} and \ref{lemma:missingtetra2} imply that $K$ is obtained from a normal $3$-pseudomanifold $K'$ by handle addition, connected sum or an edge folding. 
If $K$ is obtained from $K'$ by handle addition then $g_2(K)=g_2(K')+10\geq 10$, which is a contradiction. If $K$ is obtained $K'$ by an edge folding then $g_2(K)=g_2(K')+3$. This implies $g_2(K')\leq 1$, and hence $K'$ is a sphere. Then $K$ has exactly two singular vertices, which is a contradiction. Therefore $K$ is obtained by connected sum of some normal 3-pseudomanifolds $K_1$ and $K_2$. If $K_1$ contains a singular vertex then $g_2(K_1)\geq3$. Then $g_2(K)=g_2(K_1)+g_2(K_2)$ implies that $g_2(K_2) \leq 1$. Since $K_2$ will not contain any singular vertices, $K_1$ has more than two singular vertices. Thus $g_2(K_1)=4$ and hence $g_2(K_2)=0$. Therefore, $|K|=|K_1|$
\end{proof}

Let $K$ be a normal $3$-pseudomanifold with $g_2(K)=4$ and $K$ has more than two singular vertices. If $\partial(vxyz)$ is a missing tetrahedron in $K$  such that $\lk (v)$ is separated by the missing triangle $\partial(xyz)$, then $K$ is obtained from $K_1$ by facet subdivisions. Thus we reduce the number of vertices. Further,  $g_2(K_1)=4$ and $K_1$ has more than two singular vertices. Thus, after finite number of steps, we get a normal  $3$-pseudomanifold $\tilde K$ such that $(i)$ $g_2(\tilde K)=4$, $(ii)$ $K$ is obtained from  $\tilde K$ by facet subdivisions, i.e., $|K|=|\tilde K|$ and $\tilde K$ has more than two singular vertices and $(iii)$ $\tilde K$ has no missing  tetrahedron  $\partial(abcd)$ where $\lk (x)$ is separated by the missing triangle $\partial(abcd-x)$, for all $x\in\{a,b,c,d\}$. From now onwards, we assume that the normal $3$-pseudomanifold $K$ has no missing  tetrahedron  $\partial(vxyz)$ where $\lk (v)$ is separated by the missing triangle $\partial(xyz)$.  Let $\mathcal{G}$ be the class of normal  $3$-pseudomanifolds $K$ such that $(i)$ $g_2(K)=4$, $(ii)$  $ K$ has more than two singular vertices and $(iii)$ $K$ has no missing  tetrahedron  $\partial(abcd)$ where $\lk (x)$ is separated by the missing triangle $\partial(abcd-x)$, for all $x\in\{a,b,c,d\}$.  Thus, we have the following result.

\begin{corollary}\label{corollary:class}
Let $K$ be a normal $3$-pseudomanifold with $g_2(K)=4$ and $K$ has more than two singular vertices. Then there is a normal $3$-pseudomanifold $\tilde K \in \mathcal{G}$ such that $K$ is obtained from the normal $3$-pseudomanifold $\tilde K$ by facet subdivisions.	
\end{corollary}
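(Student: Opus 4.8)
The statement follows by iterating Lemma \ref{lemma:tetrahedron} and inducting on the number of vertices. First I would dispose of the trivial case: if $K$ has no missing tetrahedron $\partial(vxyz)$ for which $\lk (v)$ is separated by the missing triangle $\partial(xyz)$, then $K$ already satisfies all three defining conditions of $\mathcal{G}$, so we may take $\tilde K = K$ together with the empty sequence of facet subdivisions. Hence assume such a missing tetrahedron $\partial(vxyz)$ exists.

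In that case Lemma \ref{lemma:tetrahedron} gives $K = K_1 \# K_2$ with $vxyz$ a facet of both summands and $g_2(K_2)=0$; as established in the proof of that lemma, $g_2(K_1)=4$, $K_1$ again has more than two singular vertices, and $|K_1| = |K|$. Since $g_2(K_2)=0$, the summand $K_2$ is a stacked $3$-sphere, hence $f_0(K_2)\geq 5$, and because connected sum along a facet identifies its four vertices we get $f_0(K_1) = f_0(K) + f_0(K_2) - 4 \leq f_0(K) - 1 < f_0(K)$. Moreover, connected sum with a stacked sphere along a facet is exactly a sequence of facet subdivisions, so $K$ is obtained from $K_1$ by facet subdivisions. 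Now I would repeat the argument with $K_1$ in place of $K$: either $K_1 \in \mathcal{G}$ and the process stops, or another stacked-sphere summand splits off and $f_0$ strictly decreases once more. As $f_0$ is a positive integer, after finitely many steps we reach a normal $3$-pseudomanifold $\tilde K$ with $g_2(\tilde K)=4$, with more than two singular vertices, and with no bad missing tetrahedron, i.e.\ $\tilde K \in \mathcal{G}$. Composing the facet subdivisions produced at the successive steps (the relation ``is obtained by facet subdivisions'' being transitive) shows $K$ is obtained from $\tilde K$ by facet subdivisions.

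The only point that needs checking — and it is a mild one — is that the reduction is self-improving: each reduced complex must still have $g_2 = 4$ and more than two singular vertices in order that Lemma \ref{lemma:tetrahedron} can be applied again. Both of these are part of the conclusion of Lemma \ref{lemma:tetrahedron} (the first is stated there, the second is derived in its proof), so no extra work is required; together with the strict decrease of $f_0$ this makes the induction terminate. There is thus no genuine obstacle: the corollary is essentially a bookkeeping consequence of Lemma \ref{lemma:tetrahedron}, formalizing the paragraph preceding its statement.
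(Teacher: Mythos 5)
Your proposal is correct and takes essentially the same route as the paper: iterate Lemma \ref{lemma:tetrahedron}, noting that each reduced complex retains $g_2=4$ and more than two singular vertices while the number of vertices strictly decreases, so after finitely many steps one lands in $\mathcal{G}$ and the facet subdivisions compose. The only blemish is your intermediate vertex count, which should read $f_0(K_1)=f_0(K)-f_0(K_2)+4$ rather than $f_0(K)+f_0(K_2)-4$; the inequality $f_0(K_1)\leq f_0(K)-1$ that you actually use is nevertheless correct since $f_0(K_2)\geq 5$.
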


\begin{lemma}\label{lemma:reduction} If $K\in \mathcal{G}$ then there is no non-singular vertex in $K$ whose link contains a missing triangle.
\end{lemma}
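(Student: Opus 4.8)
The plan is a proof by contradiction. Suppose $K\in\mathcal{G}$ has a non-singular vertex $v$ such that $\lk(v,K)$ contains a missing triangle $\partial(abc)$, so that $\partial(abc)\subseteq\lk(v)$ while $abc\notin\lk(v)$. Since $v$ is non-singular, $|\lk(v)|\cong\mathbb{S}^2$, and the subcomplex $\partial(abc)$ is an embedded circle in $|\lk(v)|$; hence $\partial(abc)$ separates $\lk(v)$ into two discs $D_1,D_2$. The first step is to rule out $abc\in K$. If $abc\in K$, then, since $\partial(abc)\subseteq\lk(v)$ forces $vab,vbc,vca\in K$, all four triangles of the tetrahedron $vabc$ lie in $K$ while $vabc\notin K$ (otherwise $abc\in\lk(v)$); thus $\partial(vabc)$ would be a missing tetrahedron of $K$ whose opposite triangle $\partial(abc)$ separates $\lk(v)$ --- precisely the configuration forbidden by condition $(iii)$ in the definition of $\mathcal{G}$ (equivalently, by the standing assumption that $K$ has no missing tetrahedron $\partial(vxyz)$ with $\partial(xyz)$ separating $\lk(v)$). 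Hence $abc\notin K$, i.e. $\partial(abc)$ is a genuine missing triangle of $K$ lying inside $\lk(v)$.

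In this case I would exploit that, since $v$ is non-singular, $\Star(v,K)$ is a $3$-ball with boundary $\lk(v)$, and apply the \emph{two facets insertion} of Section 4 at $v$ along $\partial(abc)$: delete $v$, insert the triangle $abc$, and cone the two $2$-spheres $D_i\cup\{abc\}$ by two new vertices. This produces a normal $3$-pseudomanifold $K'$ with $g_2(K')=g_2(K)-1=3$ and $|K'|\cong|K|$. The point to verify is that $K'$ still has more than two singular vertices. Since this operation merely retriangulates the $3$-ball $\Star(v,K)$ rel its boundary sphere $\lk(v)$, it removes only the single, non-singular vertex $v$, adds two vertices whose links are $2$-spheres, and changes the link of a surviving vertex $u\in\lk(v)$ only by replacing the disc $\{v\}\star\lk(uv,K)$ inside it by a different disc glued along the same circle $\lk(uv,K)$; so every vertex link keeps its homeomorphism type, and $K'$ has exactly the same singular vertices as $K$. (Alternatively, the non-manifold points of $|K|\cong|K'|$ form a triangulation-independent finite set which in any triangulation is exactly the set of singular vertices, and this already forces the count to be preserved.)

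Consequently $K'$ is a normal $3$-pseudomanifold with $g_2(K')=3$ and more than two singular vertices; in particular $K'$ is not a $3$-sphere, so the $g_2=3$ characterization of \cite{BasakSwartz} applies and forces $K'$ to have exactly two singular vertices --- a contradiction, which proves the lemma. I expect the one genuinely delicate point to be the bookkeeping in the \emph{two facets insertion} step --- namely that $K'$ is again a normal $3$-pseudomanifold and that it keeps more than two singular vertices --- and, as indicated above, this is best handled by regarding the operation as a local retriangulation of a $3$-ball fixing its boundary sphere $\lk(v)$.
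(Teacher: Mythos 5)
Your proposal is correct and follows essentially the same route as the paper: the paper also splits on whether the triangle $abc$ lies in $K$, using the two facets insertion at $v$ (reducing $g_2$ to $3$ while keeping more than two singular vertices, contradicting the $g_2=3$ characterization) when $abc\notin K$, and the forbidden separating missing tetrahedron from the definition of $\mathcal{G}$ when $abc\in K$. You merely treat the two cases in the opposite order and spell out the bookkeeping (preservation of singular vertices under the retriangulation) that the paper leaves implicit.
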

\begin{proof} If possible let there be a non-singular vertex $v$ in $K$ and $\partial(xyz)\in\ \lk (v)$. We first observe that $xyz$ is a face of $K$. Otherwise, we could retriangulate the $3$-ball $\Star v$ by removing $v$, inserting $xyz$ and then coning off the two spheres formed by the link of $v$ and $xyz$. The resulting complex would have $g_2$ one less than $K$, that is $g_2(K)=3$ but it implies that $K$ has exactly two  singular vertices. This is a contradiction. Thus $xyz$ is in $K$
and $vxyz$ is a missing tetrahedron. Since $v$ is a non-singular vertex, $\lk (v)$ is separated by the missing triangle $\partial(xyz)$. This is again a contradiction as we assumed that the normal $3$-pseudomanifold $K$ has no such missing tetrahedron.
\end{proof}

\begin{lemma}\label{lemma:d>4}
	Let $K \in \mathcal{G}$. If $uv$ is an edge of $K$ then $d(uv)\geq 4$, i.e., $\lk (v,\lk (u))$ has at least four vertices.
\end{lemma}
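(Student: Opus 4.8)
The plan is to assume $d(uv)\le 3$ and reach a contradiction. Since $\lk(v,\lk(u,K))=\lk(uv,K)$ is a connected normal $1$-pseudomanifold it is a cycle, so $d(uv)\ge 3$ always and I may assume $d(uv)=3$; write $\lk(uv,K)=C_3(a,b,c)$, so that $uvab,uvbc,uvca\in K$ and $u,v,a,b,c$ are five distinct vertices. First I would rule out $abc\notin K$: in that case the bistellar $2$-move of Section~$4$ retriangulates the $3$-ball $\Star(uv,K)$ and yields $K'=(K\setminus\{\alpha\in K:uv\le\alpha\})\cup\{abc,uabc,vabc\}$, a normal $3$-pseudomanifold with $|K'|\cong|K|$, $f_0(K')=f_0(K)$ and $g_2(K')=g_2(K)-1=3$. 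Since this move keeps the underlying space and all vertices fixed, $K'$ has exactly the same singular vertices as $K$, hence more than two; but by \cite{BasakSwartz} a normal $3$-pseudomanifold with $g_2=3$ has exactly two singular vertices, a contradiction. So from now on $abc\in K$.

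Then $uab,uac,ubc,vab,vac,vbc\in K$, and together with $abc\in K$ this shows $\partial(uabc)\subseteq K$ and $\partial(vabc)\subseteq K$. If both $uabc\in K$ and $vabc\in K$, then all five facets $uvab,uvbc,uvca,uabc,vabc$ of the $4$-simplex on $\{u,v,a,b,c\}$, hence all of its proper faces, lie in $K$, i.e.\ $\partial(uvabc)\subseteq K$. As every $2$-face of $\partial(uvabc)$ already lies in two facets of $\partial(uvabc)$, no facet of $K$ outside $\partial(uvabc)$ can meet it in a $2$-face; since $K$, being a normal pseudomanifold, is strongly connected, this forces $K=\partial(uvabc)$, contradicting $g_2(K)=4$. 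Hence, after possibly interchanging $u$ and $v$, I may assume $uabc\notin K$, so $uabc$ is a missing tetrahedron of $K$.

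Next I would study $\lk(u,K)$. There the vertex $v$ has $\lk(v,\lk(u,K))=\lk(uv,K)=C_3(a,b,c)$, so its closed star is a disk with boundary $\partial(abc)$, while $abc\notin\lk(u,K)$ because $uabc\notin K$; thus $\partial(abc)=\partial(uabc-u)$ bounds a disk in the surface $\lk(u,K)$, so it is two-sided and separates $\lk(u,K)$. Now, since $K\in\mathcal{G}$ and $uabc$ is a missing tetrahedron, property $(iii)$ forbids $\partial(uabc-x)$ from separating $\lk(x)$ for all $x\in\{u,a,b,c\}$; as it does separate $\lk(u)$, for some $x\in\{a,b,c\}$ a small neighbourhood of $|\partial(uabc-x)|$ in $|\lk(x)|$ is a M\"obius strip, say for $x=a$. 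By Lemma~\ref{lemma:three mobius} applied to $uabc$, if this M\"obius behaviour held for all of $a,b,c$ it would also hold for $u$, which is false; so it fails for some $y\in\{b,c\}$, i.e.\ $\partial(uabc-y)$ separates $\lk(y)$, say $y=b$. Then $uabc$ satisfies the hypotheses of Lemma~\ref{lemma:missingtetra2} with separating vertices $u,b$ and M\"obius vertex $a$, so there is a normal $3$-pseudomanifold $\Delta'$ with $K=(\Delta')^{\psi}_{ac}$ obtained from $\Delta'$ by an edge folding at the edge $ac$.

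To finish, by \eqref{edge folding g2} we get $g_2(\Delta')=g_2(K)-\binom{3}{2}=1$, so $\Delta'$ is a $3$-sphere and in particular has no singular vertices, whence $g_2(\Delta')+g_3(\Delta')=\sum_{w}(2-\chi(\lk(w)))=0$. A short $f$-vector computation, in the spirit of \eqref{edge folding g2}, shows that an edge folding in dimension three changes $g_3$ by $-1$; therefore $g_2(K)+g_3(K)=\bigl(g_2(\Delta')+g_3(\Delta')\bigr)+(3-1)=2$, so $K$ has exactly two singular vertices, contradicting $K\in\mathcal{G}$. Putting the three cases together, $d(uv)=3$ is impossible and $d(uv)\ge 4$. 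The hard part is the last case ($uabc$ a missing tetrahedron): one must extract from the single hypothesis $d(uv)=3$ that $\lk(u)$ is separated by $\partial(abc)$, and then carefully bookkeep which of the four vertex links of $uabc$ are ``separating'' and which are ``M\"obius'' so that Lemmas~\ref{lemma:three mobius} and \ref{lemma:missingtetra2} apply; the ``more than two singular vertices'' clause in the definition of $\mathcal{G}$ is precisely what makes the resulting edge folding impossible.
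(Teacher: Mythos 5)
Your proof is correct, and its first half (ruling out $abc\notin K$ by a bistellar $2$-move together with the fact that $g_2=3$ forces exactly two singular vertices) is exactly the paper's argument; where you diverge is the case $abc\in K$. The paper reads condition $(iii)$ in the definition of $\mathcal{G}$ the way it is used throughout Section 5 (see Lemma \ref{lemma:reduction}): $K$ has no missing tetrahedron possessing \emph{any} vertex whose link is separated by the opposite missing triangle. Since $\lk (uv)=\partial(abc)$ forces $\partial(abc)$ to bound a disc in $\lk (u)$ and in $\lk (v)$, this immediately gives $uabc,vabc\in K$, hence $\partial(uvabc)\subset K$ and $K=\partial(uvabc)$, the desired contradiction, in one line. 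You instead adopt the literal, weaker reading of $(iii)$ (no missing tetrahedron \emph{all four} of whose vertex links are separated), so after reducing to the missing tetrahedron $uabc$ with $\lk (u)$ separated you still have to kill it by hand: you use the separating/M\"obius dichotomy, Lemma \ref{lemma:three mobius} and Lemma \ref{lemma:missingtetra2} to exhibit $K$ as an edge folding of a normal $3$-pseudomanifold $\Delta'$ with $g_2(\Delta')=1$ (hence a sphere), and then a $g_2+g_3$ count to conclude $K$ has at most two singular vertices, contradicting $K\in\mathcal{G}$. Your claim that an edge folding changes $g_3$ by $-1$ checks out: the folding gives $\Delta f_0=-2$, $\Delta f_1=-5$, $\Delta f_2=-4$, so $\Delta g_3=\Delta f_2-3\Delta f_1+6\Delta f_0=-1$, while $\Delta g_2=3$, so $g_2(K)+g_3(K)=2$ and the displayed formula for $g_2+g_3$ bounds the number of singular vertices by two. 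This detour is sound -- it essentially re-derives, via $g_3$, the fact invoked in Lemma \ref{lemma:tetrahedron} that an edge folding of a sphere produces exactly two singular vertices -- and it has the virtue of being robust to the ambiguity in how $(iii)$ is phrased; the paper's route is shorter because the stronger reading of the class $\mathcal{G}$ does that work for free.
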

 If possible let $d(uv)= 3$ and $\lk (uv)=\partial(abc)$. If possible, let $abc \in K$, then $\partial(uabc)$ and $\partial(vabc) \in K$. Then $\partial(abc)$ separates $\lk (u)$ and $\lk (v)$ in two parts. Since  $K \in \mathcal{G}$, we have $uabc,vabc\in K$. Thus $uabc$, $vabc$, $uvab$, $uvbc$, $uvac$ are all in $K$, i.e., $\partial(uvabc) \subset K$. But this is possible only if $\partial(uvabc) = K$ which is a contradiction as $K$ has more than two singular vertices. Thus, $abc \not \in K$. Let $K'=(K-\{\alpha\in K: uv \leq \alpha\})\cup\{abc, uabc, vabc\}$. Since $abc\not \in K$ and we retriangulate the 3-ball $\Star (uv)$, we have $|K'|\cong |K|$. Here $K'$ has also more than two singular vertices. Further, $f_0(K')=f_0(K)$ and $f_1(K')=f_1(K)-1$ and we get $g_2(K')=g_2(K)-1=3$. This implies  that $K'$ has exactly two singular vertices. This is again a contradiction. Thus, $d(uv)\geq 4$.

\begin{lemma}\label{lemma:intersection}
 Let $K\in \mathcal{G}$ and $a$ be a non-singular vertex in $K$ such that $ab\in K$. Then $\lk (a)\cap \lk (b)-\lk (ab)$ contains some vertices.
\end{lemma}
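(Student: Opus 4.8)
The plan is to argue by contradiction, assuming that $\lk (a)\cap \lk (b)-\lk (ab)$ contains no vertex. Since every non-empty face of a simplicial complex contains a vertex, this assumption forces $\lk (a)\cap \lk (b)=\lk (ab)$, so the edge $ab$ meets the hypothesis of Definition \ref{Edge contraction} and may be contracted. Let $K_1$ be the normal $3$-pseudomanifold obtained from $K$ by contracting $ab$ (to a new vertex $w$). Because $a$ is non-singular we have $|\lk (a)|\cong\mathbb{S}^2$, so Lemma \ref{homeomorphic} applies with the spherical vertex playing the role of ``$v$'', and therefore $|K_1|\cong|K|$.

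Next I would record the two numerical facts I need about $K_1$. First, by Lemma \ref{lemma:d>4} we have $d(ab)\geq 4$; since contracting an edge of degree $n$ lowers $g_2$ by $n-3$, it follows that $g_2(K_1)=4-(d(ab)-3)\leq 3$. Second, $K_1$ still has more than two singular vertices. Indeed, in a normal $3$-pseudomanifold the link of every edge is an $\mathbb{S}^1$ and the link of every triangle is an $\mathbb{S}^0$, while the link of a vertex is an $\mathbb{S}^2$ exactly when that vertex is non-singular; hence the non-manifold points of $|K|$ are precisely its singular vertices, so their number is a topological invariant of $|K|$. As $|K_1|\cong|K|$, the complex $K_1$ has the same number of singular vertices as $K$, which is more than two. (Alternatively, one may cite \cite{BSR}: an edge contraction performed under the link condition when one of the two vertices has spherical link preserves the homeomorphism type of every vertex link.)

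Finally I would reach the contradiction from the classifications quoted in this section. If $g_2(K_1)=3$, then by \cite{BasakSwartz} a normal $3$-pseudomanifold with $g_2=3$ has either no singular vertex (hence is a $3$-sphere) or exactly two; either way $K_1$ cannot have more than two, a contradiction. If $g_2(K_1)\leq 2$, then by Lemma \ref{lemma:g2<3} $K_1$ is a $3$-sphere and so has no singular vertex, again a contradiction. Hence the assumption is untenable, and $\lk (a)\cap \lk (b)-\lk (ab)$ must contain some vertex.

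The one step that deserves care is the assertion that passing from $K$ to $K_1$ does not decrease the number of singular vertices; once this is secured --- either through the topological invariance of the non-manifold locus as above, or by quoting \cite{BSR} --- everything else is routine bookkeeping with the $g_2$-change formula for edge contraction and the known structure of normal $3$-pseudomanifolds with $g_2\leq 3$.
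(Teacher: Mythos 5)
There is a genuine gap at the very first step. You write that since every non-empty face of a simplicial complex contains a vertex, the assumption ``$\lk (a)\cap \lk (b)-\lk (ab)$ contains no vertex'' forces $\lk (a)\cap \lk (b)=\lk (ab)$. But $\lk (a)\cap \lk (b)-\lk (ab)$ is a set-theoretic difference of complexes, not a simplicial complex, so it can be non-empty while containing no vertices: it may contain an ``open'' edge $cd$ whose endpoints $c,d$ both lie in $\lk (ab)$ while the edge $cd$ itself does not. In that situation the link condition fails, the edge $ab$ cannot be contracted (Lemma \ref{homeomorphic} does not apply), and your entire argument collapses. This is precisely the case the paper treats separately: if $cd\in\lk(a)\cap\lk(b)$ with $c,d\in\lk(ab)$ but $cd\notin\lk(ab)$, then all four triangles of $abcd$ lie in $K$ while $abcd\notin K$, so $abcd$ is a missing tetrahedron; since $a$ is non-singular, $|\lk(a)|\cong\mathbb{S}^2$ and the missing triangle $\partial(bcd)$ separates $\lk(a)$, contradicting condition $(iii)$ in the definition of $\mathcal{G}$. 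Without this case analysis the lemma, as stated (``contains some vertices'', which is exactly what is needed later to produce missing edges for Corollary \ref{missingedges}), is not proved.

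The part of your argument treating the genuinely empty case is fine and follows the paper's route: contract $ab$, use Lemma \ref{lemma:d>4} to get $g_2(K_1)\leq 3$, note that the singular vertices survive the contraction, and contradict the known structure for $g_2\leq 3$ (Lemma \ref{lemma:g2<3} and \cite{BasakSwartz}); your justification that the number of singular vertices is preserved is, if anything, more explicit than the paper's. To complete the proof, insert the missing-tetrahedron argument above to rule out an open edge in $\lk (a)\cap \lk (b)-\lk (ab)$, so that ``no vertex in the difference'' really does reduce to ``the difference is empty.''
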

\begin{proof} 
If $\lk (a)\cap \lk (b) - \lk (ab)$ is an empty set then from Lemma \ref{homeomorphic} by contracting the edge $ab$ we get a normal $3$-pseudomanifold $K_1$ such that $|K| \cong |K_1|$ and $g_2(K_1)\leq 3$. Since $K_1$ has more than two singular vertices,
$g_2(K_1)$ can not be less than 4. Thus, $\lk (a)\cap \lk (b)-\lk (ab)$ is non-empty. If $\lk (a)\cap \lk (b)-\lk (ab)$ contains an open edge $(c,d)$ then $abcd$ is a missing tetrahedron in $K$. Since $K\in \mathcal{G}$ and  $\lk (a)$ is separated by the missing triangle $\partial(bcd)$, this is not possible. Thus,  $\lk (a)\cap \lk (b)-\lk (ab)$ contains some vertices.
\end{proof}

\begin{lemma}\label{lemma:two non-singular}
 If $K \in \mathcal {G}$ is a normal $3$-pseudomanifold with $g_2(K)=4$ and $K$ has $8$ singular vertices then $|V(K)|\geq 10$.
\end{lemma}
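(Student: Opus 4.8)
\medskip
\noindent\emph{Plan of proof.} The plan is to pin down $f_1(K)$ in terms of $f_0(K)$ and then exclude $f_0(K)\in\{8,9\}$; since $K$ has $8$ singular vertices we already have $f_0(K)\ge 8$, so this yields $|V(K)|\ge 10$. By Lemma~\ref{lemma:lower bound} every singular vertex $v$ has $\lk(v)\cong\mathbb{RP}^2$, so $\deg(v)=f_0(\lk v)\ge 6$, and, as every triangulation of $\mathbb{RP}^2$ has $f_1=3f_0-3$, we get $g_2(\lk v)=3$. From $g_2(K)=4$ we obtain $f_1(K)=4f_0(K)-6$, so $K$ has $\binom{f_0}{2}-(4f_0-6)$ non-edges and $\sum_v\deg(v)=8f_0-12$. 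The tool I would use repeatedly is the consequence of Corollary~\ref{missingedges} that for a singular vertex $v$ the graph induced by $K$ on the set $N(v)$ of vertices adjacent to $v$ has at most $f_1(\lk v)+\big(g_2(K)-g_2(\lk v)\big)=f_1(\lk v)+1$ edges; in particular, when $\deg(v)=6$ the link is a $6$-vertex, hence $2$-neighbourly, triangulation of $\mathbb{RP}^2$, so $N(v)$ is a clique.

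If $f_0=8$, all eight vertices are singular, $\sum_v\deg(v)=52$, and each degree lies in $\{6,7\}$, so exactly four vertices have degree $6$ and four degree $7$. There are only $\binom82-26=2$ non-edges and a degree-$7$ vertex has no non-neighbour, so the two non-edges form a perfect matching $\{a_1a_2,a_3a_4\}$ on the four degree-$6$ vertices. But $\lk(a_1)$ is $2$-neighbourly on $V(K)\setminus\{a_1,a_2\}\supseteq\{a_3,a_4\}$, so $a_3a_4\in K$ — a contradiction.

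The case $f_0=9$ is the main obstacle, and I would handle it in three steps. Here there is a unique non-singular vertex $p$, with $\lk(p)\cong\mathbb S^2$ and $\delta:=\deg(p)\ge 4$, and $K$ has exactly $6$ non-edges. \emph{Step 1: exclude $\delta\le 5$.} If $\delta=5$, then $\lk(p)$ is the unique $5$-vertex triangulation of $\mathbb S^2$, the bipyramid, which has a missing triangle, contradicting Lemma~\ref{lemma:reduction}. If $\delta=4$, then $\lk(p)=\partial\sigma$ for a tetrahedron $\sigma$; one checks $\sigma\notin K$ (otherwise $K$ would be the boundary of a $4$-simplex and $g_2(K)=0$), so $\sigma$ is a missing tetrahedron, and a direct computation gives $\lk(p)\cap\lk(b)=\lk(pb)$ for each $b\in V(\sigma)$, contradicting Lemma~\ref{lemma:intersection}. \emph{Step 2: no singular vertex has degree $6$.} If $v$ were such a vertex, then $\{v\}\cup N(v)$ spans a $K_7$ and the two remaining vertices $x,y$ are exactly the non-neighbours of $v$; one of them, say $x$, is singular, and $N(x)\subseteq\{y\}\cup N(v)$, so $\deg(x)\in\{6,7\}$. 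If $\deg(x)=6$, then $N(x)$ is a $6$-clique inside the $7$-set $\{y\}\cup N(v)$, and counting edges (the $K_7$ alone contributes $21$) in the two positions for $N(x)$ gives either $f_1(K)=27+\deg(y)$, forcing the impossible $\deg(y)=3$, or $f_1(K)\ge 32$ — both contradicting $f_1(K)=30$. If $\deg(x)=7$, then $N(x)=\{y\}\cup N(v)$, and the induced-neighbourhood bound for $x$ (so $K$ induces at most $f_1(\lk x)+1=19$ edges on $N(x)$, of which $15$ come from the clique $N(v)$) forces $y$ to have at most $4$ neighbours in $N(v)$, so $\deg(y)\le 5$; since $y$ is then non-singular, $y=p$ and $\delta\le 5$, excluded by Step~1. \emph{Step 3: conclude.} By Step~2 every singular vertex has degree $\ge 7$, so the singular degrees sum to at least $56$, whence $\delta\le 60-56=4$, i.e., $\delta=4$, contradicting Step~1. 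Hence $f_0\ne 9$, and $|V(K)|\ge 10$. The tightest parts are the $\delta=4$ sub-case of Step~1 (the only place Lemma~\ref{lemma:intersection} enters) and the $\deg(x)=6$ sub-case of Step~2, where the contradiction rests on the exact edge count.
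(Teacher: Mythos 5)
Your proof is correct, and it shares the paper's basic strategy (fix $f_1=4f_0-6$, bound all degrees below by $6$, and exploit that a degree-$6$ singular vertex has the $2$-neighbourly $6$-vertex $\mathbb{RP}^2$ as its link), but the organization of the $f_0=9$ case is genuinely different. Your $f_0=8$ argument is essentially the paper's, just phrased via the two non-edges forming a matching on the degree-$6$ vertices. For $f_0=9$ the paper first notes that Lemmas \ref{lemma:d>4} and \ref{lemma:intersection} already give $d(v)\geq 6$ for every non-singular vertex, and then splits only on whether a degree-$6$ singular vertex exists: if not, the degree-sum identity $d(v)+7X+8Y=60$, $X+Y=8$ forces $X=d(v)+4\geq 10$; if yes, say $d(v_1)=6$, then both non-neighbours of $v_1$ have degree $\geq 6$, so at least four vertices of $\lk(v_1)$ have degree $8$ and the rest degree $\geq 7$, giving $\sum d \geq 64>60$. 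You instead re-derive $\delta\geq 6$ for the exceptional vertex $p$ by hand (the bipyramid/missing-triangle argument via Lemma \ref{lemma:reduction} for $\delta=5$ and the $\partial$-tetrahedron link via Lemma \ref{lemma:intersection} for $\delta=4$ -- both fine, though the paper's route through Lemmas \ref{lemma:d>4} and \ref{lemma:intersection} gives this in one line), and your Step~2 is a long detour: once a degree-$6$ singular vertex $v$ exists, its clique neighbourhood already gives $21$ edges, and since its two non-neighbours each have degree $\geq 6$ you get $f_1\geq 21+5+5+1=32>30$ immediately, with no need for the case split on $\deg(x)\in\{6,7\}$ or the induced-edge bound from Corollary \ref{missingedges}. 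So: correct, same toolkit, but your Step~2 (the most delicate part of your write-up) can be replaced by the paper's three-line count, while your Step~1 is a self-contained alternative to the paper's general degree bound for non-singular vertices.
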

\begin{proof}
It follows from Lemma \ref{lemma:lower bound} that the only possible link of the singular vertices in $K$ is $\mathbb{RP}^2$. Thus $d(u)\geq 6$ for any singular vertex $u$ in $K$. Further, Lemmas \ref{lemma:d>4} and \ref{lemma:intersection} imply $d(v)\geq 6$ for any non-singular vertex $v$ in $K$. Now, we prove that it is not possible that $|V(K)|=8$ and $K$ has $8$ singular vertices. If possible, let  $|V(K)|=8$ and $K$ have $8$ singular vertices. Then we must have $4$ singular vertices of degree $6$ and $4$ singular vertices of degree $7$. Let $V(K)=\{v_1, \dots ,v_8\}$ and $d(v_1)=6$ and $v_8\not\in \lk (v_1)$. Then the degree of $v_8$ must be $6$ and in this case all other $6$ vertices except $v_1$ and $v_8$ will have degree $7$, which is a contradiction.
	
Now, we prove that $V(K)=9$ is also not possible. If this is possible then there must be a non-singular vertex in $K$. Let $V(K) = \{v_1, \dots ,v_8,v\}$ where $v$ is the only non-singular vertex. Since $g_2(K)=4$ and $|V(K)|=9$, we have $|E(K)|=30$.
	
\noindent \textbf{Case 1:}  Let there be no singular vertex of degree $6$. Let $X$ be the number of singular vertices of degree $7$ and $Y$ be the number of singular vertices of degree $8$. Then we have
	$d(v) + 7X + 8Y = 60$ and $X + Y = 8$. Solving these two we get that $d(v) + 4 = X$. But $d(v)\geq 6$ implies $X\geq 10$, which is a contradiction.
		
\noindent \textbf{Case 2:} Let $v_1$ be a singular vertex of degree $6$ and $V(\lk (v_1))=\{v_2,v_3, \dots ,v_6,v_7\}$ then $v_8,v\not\in\ \lk (v_1)$. Since $d(v),d(v_8)\geq 6$, at least $4$ vertices of $\lk (v_1)$ must be of degree $8$ and other two must be of degree at least $7$. Then $\sum_{v\in V(K)}d(v)\geq 64 >60$, which is a contradiction.
\end{proof}

 \begin{lemma}\label{lemma:degree 8}
 If $K \in \mathcal {G}$ then for any non-singular vertex $a\in K$, $d(a)\leq 8$.
 \end{lemma}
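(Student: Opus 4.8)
The plan is to bound $d(a)$ by combining Corollary~\ref{missingedges}, applied at the vertex $a$, with Lemma~\ref{lemma:intersection}, which will force a ``missing edge'' of $\lk (a)$ at every neighbour of $a$. First note that since $a$ is non-singular, $|\lk (a)|\cong\mathbb{S}^2$, so $\lk (a)$ is a triangulated $2$-sphere on $f_0=d(a)$ vertices; any such triangulation satisfies $f_1=3f_0-6$, hence $g_2(\lk (a))=f_1-3f_0+6=0$.

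Next, for each of the $d(a)$ neighbours $b$ of $a$, apply Lemma~\ref{lemma:intersection} to the edge $ab$ (legitimate since $a$ is non-singular): there is a vertex $c\in\lk (a)\cap\lk (b)-\lk (ab)$. Then $bc\in K$ (because $c\in\lk (b)$), both $b$ and $c$ lie in $V(\lk (a))$, but $bc\notin\lk (a)$, since $bc\in\lk (a)$ would mean $abc\in K$, i.e.\ $c\in\lk (ab)$, contradicting the choice of $c$. Thus each neighbour $b$ of $a$ is an endpoint of an edge of $K$ whose two endpoints lie in $\lk (a)$ but which is itself not an edge of $\lk (a)$.

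Let $H$ be the graph on vertex set $V(\lk (a))$ whose edges are exactly those edges of $K$ that have both endpoints in $\lk (a)$ but do not lie in $\lk (a)$, and set $n=|E(H)|$. By the previous paragraph $H$ has minimum degree at least $1$, so $n\ge d(a)/2$. Now Corollary~\ref{missingedges} gives $4=g_2(K)\ge g_2(\lk (a))+n = n \ge d(a)/2$, so $d(a)\le 8$. The only point requiring care when writing this out is verifying that the edge $bc$ produced at $b$ really fails to lie in $\lk (a)$, so that it is genuinely one of the edges counted by $n$; since each such edge has two endpoints, the crude minimum-degree-$1$ estimate already suffices and no finer count is needed.
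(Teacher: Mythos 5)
Your proof is correct and follows essentially the same route as the paper: apply Lemma~\ref{lemma:intersection} at each neighbour of $a$ to produce an edge of $K$ with both endpoints in $\lk(a)$ but not in $\lk(a)$, note each such edge covers at most two neighbours so there are at least $d(a)/2$ of them, and conclude via Corollary~\ref{missingedges} and $g_2(\lk(a))=0$. The paper phrases it as a contradiction from $d(a)\geq 9$ (forcing at least $5$ such edges, hence $g_2(K)\geq 5$), which is the same count.
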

 
 \begin{proof} 

 If possible let $d(a)\geq 9$. Then by Lemma \ref {lemma:intersection}, for any  vertex $u\in \lk (a)$, $\lk (a)\cap \lk (u) - \lk (ua)$ contains a vertex, say $x$. Thus, $ux\not\in \lk (a)$. Since $d(a)\geq 9$, we have at least $5$ such edges, say $e_1,e_2, \dots ,e_5$ in $K$. Since the edges  $e_1,e_2, \dots ,e_5\not\in \lk(a)$ but the vertices of  $e_1,e_2, \dots ,e_5$ are in $\lk(a)$, by Corollary \ref{missingedges} we have $g_2(K)\geq g_2(\lk(a))+5= 5$, which is a contradiction.
 \end{proof}

\begin{lemma}\label{lemma:no edge}
 If $K \in \mathcal {G}$ then there is no edge between two non-singular vertices.
\end{lemma}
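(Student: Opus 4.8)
The plan is to argue by contradiction: suppose $ab\in K$ with both $a$ and $b$ non-singular, and extract enough local structure around $a$, $b$ and the edge $ab$ to violate $g_2(K)=4$. First I would collect the constraints already available. Since $a$ is non-singular, $\lk(a)\cong\mathbb{S}^2$; by Lemma~\ref{lemma:d>4} every vertex of $\lk(a)$ has degree at least $4$ there, by Lemma~\ref{lemma:reduction} $\lk(a)$ has no missing triangle, and by Lemma~\ref{lemma:degree 8} together with the fact (used in the proof of Lemma~\ref{lemma:two non-singular}) that non-singular vertices of $K$ have degree at least $6$, we get $d(a)\in\{6,7,8\}$. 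Writing $M_a$ for the set of edges of $K$ whose endpoints lie in $\lk(a)$ but which are not edges of $\lk(a)$, Corollary~\ref{missingedges} gives $|M_a|\le g_2(K)=4$, while Lemma~\ref{lemma:intersection} applied to $a$ and each of its neighbours shows every vertex of $\lk(a)$ meets an edge of $M_a$, so $M_a$ covers the vertex set of $\lk(a)$. Counting the non-edges of the $2$-sphere $\lk(a)$ (there are $3$, $6$, $10$ for $d(a)=6,7,8$) pins $\lk(a)$ and $M_a$ down to finitely many possibilities; in particular $d(a)=6$ forces $\lk(a)$ to be the octahedron with all three of its diagonals lying in $K$. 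The same holds for $b$. Finally, since the cycle $C:=\lk(ab)$ has no chord in $\lk(a)$ (a chord would give a missing triangle together with $b$) while minimum degree $4$ forces each vertex of $C$ to have a neighbour interior to the disk bounded by $C$ in $\lk(a)$ on the side away from $b$, we get $d(ab)\le d(a)-2$ and likewise $d(ab)\le d(b)-2$; so $n:=d(ab)$ satisfies $4\le n\le\min(d(a),d(b))-2$, leaving only finitely many triples $(d(a),d(b),n)$.

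The key step is to look at $\lk(v)$ for $v\in C$. It is a closed surface of minimum degree at least $4$, and it contains the subcomplex $P_v:=\Star(a,\lk(v))\cup\Star(b,\lk(v))$, which is the union of the two disks $a\star\lk(av)$ and $b\star\lk(bv)$ glued along the ``diamond'' on $\{a,b,v',v''\}$, where $v',v''$ are the two neighbours of $v$ on $C$; the combinatorial type of $P_v$ is read off from $\lk(a)$ and $\lk(b)$. Pushing this, one finds that $v'$ has, inside $P_v$, only the neighbours $a$, $b$, its non-$C$ neighbour on $\lk(av)$ and its non-$C$ neighbour on $\lk(bv)$, and in the tight cases these last two coincide and $v'$ is interior to $P_v$, so $\deg_{\lk(v)}(v')\le 3$, a contradiction. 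The cleanest instance is $d(a)=d(b)=6$: then $n=4$, $\lk(a)$ and $\lk(b)$ are octahedra, $P_v$ turns out to be a closed $2$-sphere on just $5$ vertices, and so $\lk(v)=P_v$ has a vertex of degree $3$, impossible.

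What remains is to run this through the finitely many triples $(d(a),d(b),n)$. When some degree equals $6$ the octahedral rigidity above gives the contradiction fairly directly; when $d(a),d(b)\in\{7,8\}$ one enumerates the possible $2$-sphere triangulations $\lk(a),\lk(b)$, records which diagonals are forced into $K$ and the degrees $\deg_{\lk(a)}(v),\deg_{\lk(b)}(v)$ for $v\in C$, reconstructs the induced subgraph of $G(K)$ on $\{a,b\}\cup V(\lk(a))\cup V(\lk(b))$, and checks in each sub-configuration that either $\lk(v)$ collapses as above or the reconstructed graph is dense enough (together with the facts that every other vertex has degree $\ge 6$ and the non-singular ones degree $\le 8$) to contradict $f_1(K)=4f_0(K)-6$. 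I expect the main obstacle to be exactly this bookkeeping: the non-uniqueness of the $7$- and $8$-vertex links forces a moderately long case split, and one must be careful about which missing edges are forced and about the degrees of the vertices of $\lk(ab)$ — but no genuinely new idea beyond the $\lk(v)$-analysis of the second paragraph seems to be needed.
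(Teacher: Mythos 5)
Your plan only reaches a complete argument in the single case $d(a)=d(b)=6$, and even there the decisive step is asserted rather than proved: for $P_v$ to close up into a $2$-sphere on five vertices you need the vertex $w_a$ opposite $b$ in the octahedron $\lk(a)$ to coincide with the vertex $w_b$ opposite $a$ in $\lk(b)$. This coincidence is true, but it requires applying Lemma~\ref{lemma:intersection} to the edge $ab$ itself: $\lk(a)\cap\lk(b)-\lk(ab)$ must contain a vertex, and in the octahedral situation the only possible such vertex is $w_a=w_b$. You never apply the lemma to $ab$, so even your ``cleanest instance'' has an unjustified step. Notice that this common neighbour $z$ of $a$ and $b$ outside $\lk(ab)$ is exactly the pivot of the paper's own proof, which needs no enumeration of link triangulations at all: it first shows every vertex of the cycle $\lk(ab)$ is non-singular (two edges spanned by an $\mathbb{RP}^2$ link already force $g_2\geq 3+2=5$ via Corollary~\ref{missingedges}), then takes $z$, observes that $bz$ and suitable edges $zu_j,zu_k$ of $\lk(b)$ joining $z$ to the cycle are not edges of $\lk(a)$, adds further spanned edges coming from Lemma~\ref{lemma:intersection} at the remaining vertices of $\lk(a)$, and concludes $g_2(K)\geq 5$ from Corollary~\ref{missingedges}.

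The genuine gap is everything beyond the $6$--$6$ case. For $d(a),d(b)\in\{7,8\}$ (and the mixed cases, which you also leave at the level of ``fairly directly'') you only promise an enumeration and do not carry it out; and your fallback mechanism --- that the reconstructed subgraph on $\{a,b\}\cup V(\lk(a))\cup V(\lk(b))$ is ``dense enough to contradict $f_1(K)=4f_0(K)-6$'' --- is not a valid argument. A local configuration cannot contradict the global relation $f_1=4f_0-6$, because the vertices and edges of $K$ outside the configuration are unconstrained and $f_0(K)$ is not bounded a priori; the only tool available in this setting for converting local density into a bound on $g_2$ is Corollary~\ref{missingedges}, i.e.\ exhibiting five edges of $K$ spanned by a single vertex link (or two spanned by a singular link). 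Your proposal does not show that the $7$- and $8$-vertex enumerations always yield such a configuration or a degree-$\leq 3$ collapse of some $\lk(v)$; note that for $d(a)\in\{7,8\}$ the covering condition only pins down $|M_a|=4$, which is perfectly consistent with $g_2(K)=4$, so something beyond the counting you set up is needed there. As it stands, the lemma is established only in one special case, and the route you sketch for the rest would have to be replaced by (or reduced to) an argument like the paper's $z$-based count.
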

\begin{proof}
If possible, let $a,b$ be two non-singular vertices such that $ab\in K$. Then $\lk (ab)=C_n(u_1, \dots ,u_n)$ where $n\geq 4$. If one of $u_1,u_2, \dots ,u_n$ is singular, say $u_1$ then $g_2(u_1)=3$ and $ab\in \lk (u_1)$.  Then from Lemma \ref {lemma:intersection}, $\lk (u_1)\cap \lk (a) - \lk (au_1)$ contains a vertex, say $x$ and $\lk (b)\cap \lk (u_1) - \lk (bu_1)$ contains a vertex, say $y$. Then $ax,by\not\in \lk (u_1)$ but $a,x,b,y\in \lk (u_1)$.
Thus, by Corollary \ref{missingedges} we have $g_2(K)\geq g_2(\lk(u_1))+2=5$, which is a contradiction. Thus $u_1, \dots ,u_n$ are non-singular vertices. 

Further, from Lemma \ref{lemma:degree 8}, we have $d(a),d(b)\leq 8$. Let $\lk (a)\cap\lk (b) - \lk (ab)$ contains some vertex, say $z$. Then $d(az),d(bz)\geq 4$ and $z$ is joined with at least two $u_i$ in both $\lk (a)$ and $\lk (b)$. But $zu_i$ can not be common in $\lk (a)\cap \lk (b)$, otherwise $abzu_i$ is a missing tetrahedron with some non-singular vertices, which is not possible. 

Let $zu_j,zu_k\in \lk (b)$. Then $zu_j,zu_k\not \in \lk (a)$. Thus we have  $zu_j,zu_k, bz\not \in \lk (a)$. There are four more vertices in $\lk (a)$ other than $b,z,u_j,u_k$. Then from Lemma \ref{lemma:intersection}, we have $\lk (a)\cap \lk (x) - \lk (ax)$ contains a vertex, for all $x\in V(\lk (a))\setminus \{b,z,u_j,u_k\}$. Thus, we have at least two edges $e_1,e_2$ other than $zu_j,zu_k, bz$ whose end points are in $\lk (a)$ but $e_1, e_2\not \in \lk (a)$. Therefore,  we have 5 edges $e_1,e_2,zu_j,zu_k,bz\not\in \lk(a)$ but the vertices of  $e_1,e_2,zu_j,zu_k,bz$ are in $\lk(a)$. Thus, by Corollary \ref{missingedges} we have $g_2(K)\geq g_2(\lk(a))+5=5$, which is a contradiction. Hence there is no edge between two non-singular vertices.
\end{proof}

\begin{lemma}\label{lemma:one non-singular}
 If $K \in \mathcal {G}$  then every singular vertex can be connected with at most one non-singular vertex.
\end{lemma}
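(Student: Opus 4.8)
The plan is a proof by contradiction. Suppose some singular vertex $w$ of $K$ is joined by edges to two distinct non-singular vertices $a$ and $b$. Since $w$ is singular, $|\lk(w)|\cong\mathbb{RP}^2$ and hence $g_2(\lk(w))=3$, so by Corollary~\ref{missingedges} there can be at most one edge of $K$ whose two endpoints lie in $\lk(w)$ but which does not itself belong to $\lk(w)$ (otherwise $g_2(K)\ge 3+2=5$, contradicting $g_2(K)=4$). The strategy is therefore to exhibit two such edges.

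First I would apply Lemma~\ref{lemma:intersection} to the non-singular vertex $a$ and the edge $aw\in K$, obtaining a vertex $x\in\lk(a)\cap\lk(w)-\lk(aw)$; note $x\notin\{a,w\}$ automatically. If $x=b$, then $ab\in K$, an edge joining the two non-singular vertices $a$ and $b$, which is impossible by Lemma~\ref{lemma:no edge}; hence $x\neq b$. Since $x\notin\lk(aw)$ we have $awx\notin K$, so the edge $ax$ does not lie in $\lk(w)$, while both of its endpoints $a,x$ do. Symmetrically, applying Lemma~\ref{lemma:intersection} to the non-singular vertex $b$ and the edge $bw$ gives a vertex $y\in\lk(b)\cap\lk(w)-\lk(bw)$ with $y\notin\{b,w\}$ and $y\neq a$ (again invoking Lemma~\ref{lemma:no edge}), so $by$ is an edge with both endpoints in $\lk(w)$ but with $by\notin\lk(w)$.

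Finally I would check that $ax\neq by$ as edges: if $\{a,x\}=\{b,y\}$, then since $a\neq b$ we would need $a=y$ or $x=b$, both of which have already been excluded. Hence $ax$ and $by$ are two distinct edges whose endpoints lie in $\lk(w)$ and which are absent from $\lk(w)$, so Corollary~\ref{missingedges} forces $g_2(K)\ge g_2(\lk(w))+2=5$, contradicting $g_2(K)=4$. This proves that every singular vertex is adjacent to at most one non-singular vertex. The only point requiring care is verifying that the two produced edges are genuinely distinct and genuinely missing from $\lk(w)$, which is precisely where Lemma~\ref{lemma:no edge} and the meaning of $\lk(aw)$ enter; everything else is bookkeeping.
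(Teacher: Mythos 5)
Your proof is correct and follows essentially the same route as the paper: apply Lemma~\ref{lemma:intersection} to each of the two non-singular neighbours to produce the edges $ax$ and $by$ missing from $\lk(w)$, use Lemma~\ref{lemma:no edge} to guarantee they are distinct, and conclude $g_2(K)\geq g_2(\lk(w))+2=5$ via Corollary~\ref{missingedges}. The only difference is that you spell out the distinctness and "missing from the link" checks that the paper leaves implicit.
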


\begin{proof} 
Let $t\in\ K$ be a singular vertex. If possible let $a,b\in \lk (t)$ be two non-singular vertices. From Lemma \ref{lemma:no edge}, we have $ab\not\in K$. Further, by Lemma \ref {lemma:intersection}, $\lk (a)\cap \lk (t) - \lk (at)$ contains a vertex, say $x$ and $\lk (b)\cap \lk (t) - \lk (bt)$ contains a vertex, say $y$. Then $ax,by \not \in \lk (t)$ and $ax,by$ are two different edges. Then, by Corollary \ref{missingedges} we have $g_2(K)\geq g_2(\lk(t))+2= 5$, which is a contradiction. Thus,  every singular vertex can be connected with at most one non-singular vertex.

\end{proof}

\begin{lemma}\label{lemma:4 and 8} 
There does not exist any normal $3$-pseudomanifold with more than $2$ singular vertices and $g_2(K)=4$.
\end{lemma}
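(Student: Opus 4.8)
The plan is to deduce the statement from the structural lemmas just proved by showing that the class $\mathcal{G}$ is empty. By Corollary \ref{corollary:class}, every normal $3$-pseudomanifold with $g_2=4$ and more than two singular vertices arises from a member of $\mathcal{G}$ by facet subdivisions, so establishing $\mathcal{G}=\emptyset$ is exactly what is needed. Assume then $K\in\mathcal{G}$. Recall that the number of singular vertices of such a $K$ is even, at most $8$, and cannot equal $6$; since it exceeds $2$, it is either $4$ or $8$, and the link of each singular vertex is $\mathbb{RP}^2$.

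The heart of the argument is a double count of the edges of $K$ joining a singular vertex to a non-singular one. Write $S$ and $N$ for the sets of singular and non-singular vertices of $K$. By Lemma \ref{lemma:no edge} there is no edge with both endpoints in $N$, so for each $a\in N$ all $d(a)$ neighbours of $a$ lie in $S$; on the other hand, Lemma \ref{lemma:one non-singular} says every vertex of $S$ has at most one neighbour in $N$. Counting the edges between $N$ and $S$ from the two sides gives $\sum_{a\in N}d(a)\le|S|$. Next, since $K\in\mathcal{G}$, Lemma \ref{lemma:d>4} forces $d(uv)\ge 4$ for every edge $uv$; hence for a non-singular vertex $a$ the link $\lk(a,K)$ is a triangulated $2$-sphere in which every vertex has degree at least $4$, and the standard Euler-characteristic count ($2E=3F$, $V-E+F=2$, so $2E=6V-12\ge 4V$) gives $d(a)=f_0(\lk(a,K))\ge 6$.

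Now I finish by cases. If $|S|=4$, then $6\,|N|\le\sum_{a\in N}d(a)\le 4$, which forces $N=\emptyset$, and hence $f_0(K)=4$; but a normal $3$-pseudomanifold has at least $5$ vertices (with only $4$ vertices the complex contains at most one tetrahedron, so some triangle would lie in fewer than two facets), a contradiction. If $|S|=8$, then $6\,|N|\le 8$ gives $|N|\le 1$, so $f_0(K)=8+|N|\le 9$, contradicting Lemma \ref{lemma:two non-singular}, which asserts $|V(K)|\ge 10$ for such $K$. In either case we reach a contradiction, so $\mathcal{G}=\emptyset$, and the lemma follows.

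The routine ingredients — the double-counting inequality and the bound $d(a)\ge 6$ — are short; the real content is that the earlier lemmas are collectively tight enough to eliminate both admissible counts of singular vertices at once. The place to be careful is checking that each cited lemma applies verbatim here, in particular that Lemma \ref{lemma:two non-singular} already covers the sub-case of a single non-singular vertex ($|V(K)|=9$) that arises in the eight-singular case, and that the reduction through Corollary \ref{corollary:class} really does let us assume $K\in\mathcal{G}$ without loss of generality.
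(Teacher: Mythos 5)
Your proof is correct and follows essentially the same route as the paper: reduce to the class $\mathcal{G}$ via Corollary \ref{corollary:class}, then eliminate the cases of $4$ and $8$ singular vertices using Lemmas \ref{lemma:d>4}, \ref{lemma:no edge}, \ref{lemma:one non-singular} and \ref{lemma:two non-singular}. Your global double count $\sum_{a\in N}d(a)\le |S|$ together with $d(a)\ge 6$ is just a cleaner packaging of the paper's counting (which, in the $8$-singular case, instead exhibits a non-singular vertex adjacent to six singular vertices and a second non-singular vertex forced to have degree at most $2$), so the substance is the same.
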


\begin{proof}

Let $K$ be a normal $3$-pseudomanifold with $g_2(K)=4$ and $K$ has more than two singular vertices. Then by Corollary \ref{corollary:class} there is a normal $3$-pseudomanifold $\tilde K \in \mathcal{G}$ such that $K$ is obtained from the normal $3$-pseudomanifold $\tilde K$ by facet subdivisions. Now, we can proceed with $\tilde K$. If $\tilde K$ has exactly $4$ singular vertices then there must be an edge between two non-singular vertices but this is not possible. Therefore there does not exist any normal $3$-pseudomanifold with $g_2(K)=4$ and with exactly $4$ singular vertices. 

Now if $\tilde K$ has exactly $8$ singular vertices then from Lemma \ref{lemma:two non-singular}, there must be at least two non-singular vertices. Thus there must be an edge between a singular vertex say $t$ and a non-singular vertex say $u$. Since $d(t)\geq 6$, $d(tu)\geq 4$ and $\lk (t)\cap \lk (u)-\lk (tu)$ contains some singular vertex, from Lemmas \ref{lemma:no edge} and  \ref{lemma:one non-singular}, we have $u$ is adjacent to at least $6$ singular vertices say $t_1, \dots ,t_6$. Let $t_7$ and $t_8$ be other two singular vertices. Let $v$ is  another non-singular vertex. Then from Lemmas \ref{lemma:no edge} and  \ref{lemma:one non-singular}, $v$ is only adjacent with $t_7$ and $t_8$. Thus $d(v)=2$, which is  a contradiction. Therefore there does not exist any normal $3$-pseudomanifold with $g_2(K)=4$ and with exactly $8$ singular vertices. This completes the proof.
 \end{proof}
 
 \begin{lemma}\label{lemma:two singular}
 If $K$ is a normal $3$-pseudomanifold with $g_2(K)=4$ and $K$ has exactly two singular vertices then $K$ is obtained from some boundary of $4$-simplices  by a sequence of operations such as connected sum, edge expansion and an edge folding.
 \end{lemma}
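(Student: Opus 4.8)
The plan is to argue by induction on $f_0(K)$. By Lemma~\ref{lemma:lower bound} the two singular vertices $t_1,t_2$ of $K$ have links homeomorphic to $\mathbb{RP}^2$, and since every triangulated $\mathbb{RP}^2$ has $g_2=3$, Corollary~\ref{missingedges} shows that for each $i$ at most one edge of $K$ has both endpoints in $\lk(t_i)$ while not lying in $\lk(t_i)$; moreover, as the minimal triangulation of $\mathbb{RP}^2$ has six vertices and the only singular vertex that can occur in $\lk(t_i)$ is $t_{3-i}$, the complex $K$ has non-singular vertices. The proof splits according to whether $K$ has a missing tetrahedron.

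\smallskip
\noindent\emph{Case $1$: $K$ has a missing tetrahedron $\partial(abcd)$.} For each $x\in\{a,b,c,d\}$ the missing triangle $\partial(abcd-x)$ of $\lk(x)$ either separates $\lk(x)$ or has a M\"{o}bius-strip neighbourhood in $\lk(x)$; the latter forces $|\lk(x)|\cong\mathbb{RP}^2$, i.e.\ $x$ is singular. Since $K$ has exactly two singular vertices, at most two of $a,b,c,d$ are of the M\"{o}bius type. If exactly one is, then Lemma~\ref{lemma:missingtetra2}, applied with the M\"{o}bius vertex as $u$ and three separating vertices as $a,b,v$, forces one of the separating vertices to be of M\"{o}bius type, a contradiction; so the number of M\"{o}bius vertices is $0$ or $2$. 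If it is $0$, Lemma~\ref{lemma:missingtetra1} shows $K$ arises by handle addition or connected sum, and handle addition is impossible by~\eqref{g_2: handles}; hence $K=K_1\#K_2$. By the parity of the number of singular vertices of a normal $3$-pseudomanifold, one summand, say $K_1$, has $2$ singular vertices and $K_2$ has none; then $g_2(K_2)\le1$, so $K_2$ is of the required form by Lemma~\ref{lemma:g2<3}, while $K_1$ has exactly two singular vertices, $f_0(K_1)<f_0(K)$, and $g_2(K_1)\in\{3,4\}$, so $K_1$ is of the required form by the induction hypothesis when $g_2(K_1)=4$ and by the known characterization of the case $g_2=3$ \cite{BasakSwartz} when $g_2(K_1)=3$; therefore so is $K$. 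If the number of M\"{o}bius vertices is $2$, say $u$ and $v$, then Lemma~\ref{lemma:missingtetra2} gives $K=(K')^\psi_{uv}$ for a normal $3$-pseudomanifold $K'$, and~\eqref{edge folding g2} gives $g_2(K')=1$, so $K'$ has no singular vertex and, by Lemma~\ref{lemma:g2<3}, is obtained from boundaries of $4$-simplices by connected sums and edge expansions; then $K$ is obtained from $K'$ by one further edge folding.

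\smallskip
\noindent\emph{Case $2$: $K$ has no missing tetrahedron.} Here the plan is to exhibit an edge $uv$ with a non-singular endpoint $v$ for which $\lk(u)\cap\lk(v)=\lk(uv)$. Granting this, Lemma~\ref{homeomorphic} shows that contracting $uv$ produces a normal $3$-pseudomanifold $K_1$ with $|K_1|\cong|K|$; contracting along the non-singular vertex $v$ preserves the homeomorphism type of every vertex link (any vertex adjacent to both $u$ and $v$ already lies in $\lk(uv)$, and the link of the new vertex is homeomorphic to $\lk(u)$), so $K_1$ still has exactly two singular vertices and thus $g_2(K_1)\ge3$. Since $g_2(K_1)=g_2(K)-(d(uv)-3)$, this forces $d(uv)\in\{3,4\}$: if $d(uv)=4$ then $g_2(K_1)=3$, and $K_1$ --- hence also $K$, recovered from $K_1$ by one edge expansion --- is of the required form by the characterization of $g_2=3$ \cite{BasakSwartz}; if $d(uv)=3$ then $g_2(K_1)=4$ with $f_0(K_1)<f_0(K)$, so $K_1$, and hence $K$, is of the required form by the induction hypothesis.

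\smallskip
It remains to produce the edge $uv$, and this I expect to be the main obstacle. Fix a non-singular vertex $v$ and suppose no neighbour $u$ of $v$ satisfies $\lk(u)\cap\lk(v)=\lk(uv)$. For each neighbour $u$ we have $\lk(uv)\subsetneq\lk(u)\cap\lk(v)$; considering a minimal face of $\lk(u)\cap\lk(v)$ not lying in $\lk(uv)$ and using that $\lk(uv)$ is a cycle, one sees that $\lk(u)\cap\lk(v)$ contains a vertex $w\notin\lk(uv)$ (the remaining possibilities would produce a missing tetrahedron or force $K$ to be the boundary of a $4$-simplex). Then $uw\in K$, $u,w\in\lk(v)$ and $uw\notin\lk(v)$; since $uw$ runs between two neighbours of $v$ it is produced by at most two of the $d(v)$ neighbours, so at least $\lceil d(v)/2\rceil$ edges of $K$ have both endpoints in $\lk(v)$ but do not lie in $\lk(v)$, and Corollary~\ref{missingedges} gives $4=g_2(K)\ge g_2(\lk(v))+\lceil d(v)/2\rceil$, whence every ``blocked'' non-singular vertex has degree at most $8$. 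Sharpening this estimate and running it simultaneously for $t_1$, $t_2$ and the non-singular vertices --- in the spirit of Lemmas~\ref{lemma:intersection}--\ref{lemma:one non-singular} --- should contradict the assumption that every non-singular vertex is blocked, thereby producing the desired $uv$; carrying this out within the tight budget $g_2(K)=4$ is, I expect, the crux of the proof.
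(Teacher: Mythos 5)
Your Case 1 (the missing-tetrahedron case) is essentially the paper's argument: handle addition is excluded because it adds $10$ to $g_2$, connected summands without singular vertices have $g_2\le 1$ and are handled by Lemma \ref{lemma:g2<3}, and when two vertices of the missing tetrahedron are of M\"obius type, Lemma \ref{lemma:missingtetra2} exhibits $K$ as an edge folding of a complex with $g_2\le 1$. (Your use of the $g_2=3$ characterization of \cite{BasakSwartz} as a black box for a summand with $g_2(K_1)=3$ is a mild deviation --- the paper instead re-runs its own reduction until it lands back in the folding case --- but that is harmless.)

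The genuine gap is Case 2, and you flag it yourself: you never produce the edge $uv$ with a non-singular endpoint and $\lk(u)\cap\lk(v)=\lk(uv)$, and this is where all the real work of the lemma lies. Your sketch anchors the count at a non-singular vertex $v$, where $g_2(\lk(v))=0$ leaves a rigidity budget of four extra edges, so Corollary \ref{missingedges} only yields the weak bound $d(v)\le 8$ for ``blocked'' vertices, with no visible route from there to a contradiction. The paper closes exactly this step by anchoring at a singular vertex $u$ instead, using precisely the observation you make in your opening paragraph and then abandon: since $g_2(\lk(u))=3$ and $g_2(K)=4$, Corollary \ref{missingedges} permits at most \emph{one} edge of $K$ whose endpoints lie in $\lk(u)$ but which is not an edge of $\lk(u)$. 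One first checks (using that $K$ has no missing tetrahedron) that $\lk(u)$ contains two adjacent non-singular vertices $p,q$ with $d(pu),d(qu)\ge 4$; if both $\lk(u)\cap\lk(p)-\lk(up)$ and $\lk(u)\cap\lk(q)-\lk(uq)$ were nonempty, each would contribute a distinct such edge (an open edge in either intersection would create a missing tetrahedron), exceeding the budget of one. Hence one of the two intersections is empty, that edge can be contracted (Lemma \ref{homeomorphic}, the non-singular endpoint guaranteeing $|K|$ is unchanged), and $g_2$ drops to $3$; since $g_2$ cannot fall below $3$ while two singular vertices persist, iterating the whole argument forces the missing-tetrahedron/edge-folding case after finitely many steps. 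Without an argument of this kind your induction in Case 2 does not get off the ground, so as written the proposal is incomplete at its crux.
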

\begin{proof} 
Let $K$ have a missing tetrahedron $\sigma =\partial(abcd)$ such that for every vertex $x\in\sigma$, $\sigma-x$ separates the link of $x$. Then from Lemma \ref{lemma:missingtetra1}, $K$ is obtained by handle addition or connected sum. If $K$ is obtained from $K'$ by a handle addition then $g_2(K)=g_2(K')+10\geq 10$, which is not possible. Thus, $K$ is obtained from some connected sums, i.e., $K=K_1\#K_2$. If $K_1$ contains a singular vertex then $K_1$ contains the other singular vertex as well. Thus $g_2(K_1)\geq 3$ and $g_2(K_2)\leq 1$.   
  So, there is a finite collection of normal $3$-pseudomanifolds $K_1, \dots ,K_n$ such that $K=K_1\# \dots \# K_n$ where $K_1$ contains both the singular vertices. Further for $2\leq i \leq n$,  $K_i$ has no missing tetrahedron, and if $K_1$ has a missing tetrahedron $\sigma$ then $\sigma=uvab$, where $u,v$ are the singular vertices and a small neighbourhood of $\partial (vab)$ in $\lk (u)$ is a M\"obius strip. If $g_2(K_i)=1$ for some $2\leq i \leq n$, then by Lemma \ref{lemma:g2<3}, $K_i$ is obtained from  boundary of $4$-simplices  by a sequence of operations such as connected sum and edge expansion.  If $g_2(K_i)=0$ then  $K_i$ is a connected sum  of boundary of $4$-simplices. Now, we look at $K_1$.
 
 \smallskip
  
  \noindent \textbf{Case 1:}  Let $K_1$ have a missing tetrahedron $uvab$,  where $u,v$ are the singular vertices and a small neighbourhood of $\partial (vab)$ in $\lk (u)$ is a M\"obius strip. Then by Lemma \ref{lemma:missingtetra2}, $K_1$ is obtained from $K_1'$ by an edge folding and $g_2(K_1')=g_2(K)-3\leq 1$. Thus,  by Lemma \ref{lemma:g2<3} $K_1'$ is obtained from  boundary of $4$-simplices  by a sequence of operations such as connected sum and edge expansion. Then we are done.
  
  \smallskip
  
  \noindent \textbf{Case 2:} Let $K_1$ have no missing tetrahedron. Let $u$ be the singular vertex. The $d(u)\geq 6$. Let $x$ be a non-singular vertex such that $d(xu)=3$, say $\lk (xu)=\partial(pqr)$. If   $pqr \in K_1$ then $\partial (xpqr), \partial (upqr)  \in K_1$.
  Since $K_1$ have no missing tetrahedron, $xpqr, upqr  \in K_1$. Thus, $K_1=\partial(uxpqr)$. This is a contradiction. Thus, $pqr \not \in K_1$. Thus, $p,q,r$ has degree at least 4 in $\lk (u)$. Thus we have always at least two non-singular vertices say $p,q \in \lk (u)$ such that $pq \in \lk (u)$ and $d(pu),d(qu)\geq 4$.  If $\lk (u)\cap \lk (p) - \lk (up)$ contains an open edge $(a,b)$ then $\partial(upab)$ is a missing tetrahedron, which is a contradiction. If $\lk (u)\cap \lk (p) - \lk (up)$ contains some vertex, say $x$ then $x\neq q$ and $px\not\in \lk (u)$. Similarly,  if $\lk (u)\cap \lk (q) - \lk (uq)$ contains some vertex, say $y$ then $y\neq p$ and $qy\not\in \lk (u)$. Then $px,qy \in K_1$ where $p,q,x,y\in \lk (u)$ but $px,qy \not \in \lk (u)$. Then  from Corollary \ref{missingedges}, we have $g_2(K_1) \geq g_2(\Star (u))+2 = 5$. This is a contradiction. Thus,  $\lk (u)\cap \lk (p) - \lk (up)$ or $\lk (u)\cap \lk (q) - \lk (uq)$ must be empty. Without loss of generality, assume that  $\lk (u)\cap \lk (p) - \lk (up)$  is empty. Then we contract the edge $up$ and we get a new normal $3$-pseudomanifold $\tilde K_1$. From Lemma \ref{homeomorphic}, we have $|\tilde K_1|\cong |K_1|$ and $g_2(\tilde K_1) \leq g_2(K_1)-1$. Thus, if such edge contraction is possible then $g_2(K_1)=4$ and $g_2(\tilde K_1)=3$, and $\tilde K_1$ may have a missing tetrahedron. Now we repeat the same arguments from the beginning. Since   $g_2(\tilde K_1)$ can not be reduced further, there will be no more edge contraction. Thus we must move to Case 1 only.
  
Thus, $K$ is obtained from a boundary of $4$-simplices  by a sequence of operations such as connected sum, edge expansion and  edge folding.
 \end{proof}
 
  Let $K$ be a normal $3$-pseudomanifold with $g_2(K)\leq 4$, which is not a sphere. Then by Lemma \ref{lemma:4 and 8}, $K$ has exactly two singular vertices. Then combining with Lemma \ref{lemma:two singular}, we have the following result.
  
  \begin{theorem}
   If $K$ is a normal $3$-pseudomanifold with $g_2(K)\leq 4$, which is not a sphere then $K$ is obtained from some boundary of $4$-simplices  by a sequence of operations such as connected sum, edge expansion and an edge folding.
 \end{theorem}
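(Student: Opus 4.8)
The plan is to dispose of the theorem by a short case analysis on the value of $g_2(K)$, feeding into the structural lemmas already proved and into the known low-$g_2$ classification; essentially no new geometry is needed. First I would record the constraints forced by $|K|\not\cong\mathbb{S}^3$. Since $K$ is not a $3$-sphere it must have a singular vertex, and since the total number of singular vertices of a normal $3$-pseudomanifold is even, it has at least two of them. By Lemma~\ref{lemma:lower bound} the link of any singular vertex is a triangulation of $\mathbb{RP}^2$, and every such triangulation has $g_2\geq 3$; hence $3\leq g_2(K)\leq 4$. In particular $g_2(K)\leq 2$ cannot occur here (it would force $K$ to be a sphere by Lemma~\ref{lemma:g2<3}), so only $g_2(K)=3$ and $g_2(K)=4$ survive.

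For $g_2(K)=4$, Lemma~\ref{lemma:4 and 8} says a normal $3$-pseudomanifold with $g_2=4$ has at most two singular vertices, so combined with the previous paragraph $K$ has exactly two singular vertices. Lemma~\ref{lemma:two singular} then applies directly and gives that $K$ is obtained from some boundaries of $4$-simplices by a sequence of connected sums, edge expansions and one edge folding, which is exactly the assertion.

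For $g_2(K)=3$ I would invoke the existing characterization: such $K$ has exactly two singular vertices and is described by Basak--Swartz~\cite{BasakSwartz} (building on \cite{Kalai,NevoNovinsky,Zheng}) precisely as a connected sum of boundaries of $4$-simplices followed by edge expansions and a single edge folding. Alternatively one can rerun the proof of Lemma~\ref{lemma:two singular} with $g_2=3$ in place of $g_2=4$: split off connected summands until the only surviving missing tetrahedron is a M\"obius-type one $uvab$ through the two singular vertices $u,v$, use Lemma~\ref{lemma:missingtetra2} to peel off one edge folding and reach a complex of $g_2\leq 0$, which by Lemma~\ref{lemma:g2<3} is a connected sum of boundaries of $4$-simplices; the removed summands are again covered by Lemma~\ref{lemma:g2<3}, and facet subdivisions are nothing but connected sums with a boundary of a $4$-simplex.

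I do not expect a genuine obstacle in this final step, since the heavy lifting sits in Lemmas~\ref{lemma:4 and 8} and~\ref{lemma:two singular} and in the $g_2=3$ literature; the only point needing care is confirming that the $g_2=3$ case is truly subsumed — the safe route being to cite \cite{BasakSwartz} outright, while noting that the proof of Lemma~\ref{lemma:two singular} uses $g_2=4$ only through arithmetic inequalities (namely $g_2-3\leq 1$ and the degree counts), all of which hold a fortiori when $g_2=3$. It is also worth a one-line sanity check that at most one edge folding appears: an edge folding raises $g_2$ by $\binom{3}{2}=3$ and a handle addition by $\binom{5}{2}=10$, so a second edge folding (or any handle addition) would push $g_2$ above $4$; thus, once singular vertices are present, exactly one edge folding is used to pass from a terminal $3$-sphere with $g_2\leq 1$ to $K$.
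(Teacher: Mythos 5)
Your proposal is correct and follows essentially the same route as the paper: the presence of singular vertices forces $3\leq g_2(K)\leq 4$, Lemma~\ref{lemma:4 and 8} excludes more than two singular vertices, and Lemma~\ref{lemma:two singular} settles the $g_2(K)=4$ case. Your explicit handling of the $g_2(K)=3$ case (citing \cite{BasakSwartz}, or rerunning Lemma~\ref{lemma:two singular} with the weaker bound) is somewhat more careful than the paper, which covers that case only implicitly through the known characterization mentioned in the introduction.
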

 
\medskip
 
  \noindent {\bf Acknowledgement:} 
The first author is supported by DST INSPIRE Faculty Research Grant (DST/INSPIRE/04/2017/002471). The second author is supported by CSIR (India).

	 {

\end{document}